\newcommand{\al}{\alpha}
\newcommand{\e}{\varepsilon}
\newcommand{\Or}{\mathcal{O}}
\newcommand{\ov}{\overline}
\newcommand{\gl}{\mathfrak{g}}
\newcommand{\ol}{\mathfrak{o}}
\newcommand{\pl}{\mathfrak{p}}
\newcommand{\s}{\mathfrak{s}}
\newcommand{\Ll}{\mathfrak{l}}
\renewcommand{\l}{\mathfrak{l}}
\renewcommand{\t}{\mathfrak{t}}
\newcommand{\ug}{\mathfrak{u}}
\newcommand{\gM}{\mathfrak{M}}
\newcommand{\F}{\mathbb F}
\newcommand{\Z}{\mathbb Z}
\DeclareMathOperator{\uni}{uni}
\DeclareMathOperator{\nil}{nil}
\DeclareMathOperator{\Ad}{Ad}
\DeclareMathOperator{\Lie}{Lie}
\DeclareMathOperator{\GL}{GL}
\DeclareMathOperator{\GU}{GU}
\DeclareMathOperator{\Atype}{A}
\DeclareMathOperator{\Btype}{B}
\DeclareMathOperator{\Ctype}{C}
\DeclareMathOperator{\Dtype}{D}
\DeclareMathOperator{\chara}{char}
\newtheorem*{theorem}{Theorem}
\newtheorem*{corollary}{Corollary}
\newtheorem*{proposition}{Proposition}
\newtheorem*{lemma}{Lemma}
\newtheorem*{example}{Example}
\newtheorem*{rmk}{Remark}
\newtheorem*{acknowledgements}{Acknowledgements}
\begin{document}

\title[Computing nilpotent and unipotent canonical forms]{Computing nilpotent and unipotent canonical forms: a symmetric approach}

\author[Matthew C. Clarke]{MATTHEW C. CLARKE\\
Trinity College,\\
Cambridge, CB\textup{2 1}TQ\\
email: \texttt{m.clarke@dpmms.cam.ac.uk}}

\maketitle

\begin{abstract}
Let $k$ be an algebraically closed field of any characteristic except $2$, and let $G = \GL_n(k)$ be the general linear group, regarded as an algebraic group over $k$. Using an algebro-geometric argument and Dynkin-Kostant theory for $G$ we begin by obtaining a canonical form for nilpotent $\Ad(G)$-orbits in $\gl\l_n(k)$ which is symmetric with respect to the non-main diagonal (i.e. it is fixed by the map $f : (x_{i,j})\mapsto (x_{n+1-j,n+1-i})$), with entries in $\{0,1\}$. We then show how to modify this form slightly in order to satisfy a non-degenerate symmetric or skew-symmetric bilinear form, assuming that the orbit does not vanish in the presence of such a form. Replacing $G$ by any simple classical algebraic group we thus obtain a unified approach to computing representatives for nilpotent orbits of all classical Lie algebras. By applying Springer morphisms, this also yields representatives for the corresponding unipotent classes in $G$. As a corollary we obtain a complete set of generic canonical representatives for the unipotent classes in finite general unitary groups $\GU_n(\F_q)$ for all prime powers $q$.\end{abstract}

\section{Introduction}

The Jordan canonical form for square matrices over an algebraically closed field $k$ can be thought of as a canonical form for conjugacy classes of the general linear group $G=\GL_n(k)$, or $\Ad(G)$-orbits of the general linear Lie algebra $\gl\l_n(k)$. More generally, for an element $g$ in an algebraic group $G$, we have the Jordan-Chevalley decomposition $g=g_sg_u=g_ug_s$ for a unique semisimple element $g_s$ and unipotent element $g_u$ in $G$, and for $x \in \gl = \Lie(G)$, $x= x_s + x_n$ for a unique semisimple element $x_s$ and nilpotent element $x_n$ in $\gl$ such that $[x_s, x_n] = 0$. This existence result does not, however, yield a method for finding a representative for these unique elements, up to the $G$-action, like the Jordan canonical form. A number of algorithms for obtaining representatives have been obtained, though, for unipotent and nilpotent elements. Since, in good characteristic (e.g. for root systems of Type $\Atype$ all characteristics are good, while for Types $\Btype$, $\Ctype$ and $\Dtype$, only 2 is bad, i.e. not good), there is a $G$-equivariant bijective morphism of varieties, a {\em Springer morphism}, from the nilpotent variety $\gl_{\nil}$ of $\gl$ to the unipotent variety $G_{\uni}$ of $G$ it follows that studying unipotent conjugacy classes is equivalent to studying nilpotent orbits. (In general, one needs to assume $G$ is simply connected for a Springer morphism to exist, which will always be the case for the groups in this paper.)

Gerstenhaber (\cite{Ger}) has given a method for computing representatives of nilpotent orbits in classical Lie algebras when the characteristic of the field is not $2$. In characteristic zero, Popov (\cite{Pop}) has described another one, which is a by-product of his determination of the strata of the nullcone of a linear representation of an algebraic group. The flavour of these approaches is quite different from the present one though and each relies on an analysis of the intrinsic classical root system, whereas in ours one only ever needs to consider the root system of Type $\Atype$. In this sense our approach stresses the relationship between nilpotent orbits in $\gl$ and those in the ambient $\gl\l_n(k)$. We also note that in \cite{DGra} De Graaf presents a probabilistic \textquoteleft trial and error\textquoteright\  algorithm for computing representatives of nilpotent orbits over any algebraically closed field. Whilst the parameters may be set so as to deliver arbitrarily large probability of success, the representative obtained is not canonical. Using a computer implementation De Graaf has computed representatives for all nilpotent orbits in exceptional Lie algebras.

Our main result (from which the other results are derived using comparatively less effort) is Theorem \ref{x_J}, which describes a canonical matrix form for nilpotent orbits in $\gl\l_n(k)$. The key features of this form are that it is upper triangular, symmetric with respect to the non-main diagonal (i.e. it is fixed by the map $f : (x_{i,j})\mapsto (x_{n+1-j,n+1-i})$), and its entries lie in $\{0,1\}$. The proof of this can be divided into roughly two phases. First we prove the existence of a non-canonical representative which enjoys certain nice properties. For this we use the Dynkin-Kostant classification of nilpotent orbits combined with an algebro-geometric argument. This will allow us to describe a calculus of \textquoteleft elementary operations\textquoteright\ that may be performed on the entries of this representative whilst remaining in the original orbit. These are then applied in the second phase of the proof to give an algorithm with the flavour of Gaussian elimination which puts the representative into the prescribed canonical form. The corresponding canonical forms for the other classical Lie algebras are then derived using short additional phases to this algorithm. We remark that the algorithms used to prove that these canonical forms are indeed representatives of the prescribed orbits are not needed to actually compute them, as is true of the Jordan canonical form. 

As we will see, this canonical form is sometimes not possible in characteristic 2. However, when this is the case we are still able to find a canonical form fixed by the composite of $f$ and the standard $q^{th}$-power Frobenius endomorphism, $F_q$. It turns out that this is sufficient to allow us to obtain canonical forms for the unipotent classes in $\GU_n(\F_q)$.

For a Frobenius endomorphism $F : G \rightarrow G$ there exists a corresponding Frobenius endomorphism on $\gl$, also denoted by $F$, which is compatible with a given Springer morphism [\cite{SpSt}, Theorem III.3.12]. In [\cite{Kawa}, \S 1.2] Kawanaka has given explicit formulas for $F$-stable Springer morphisms in the case of classical groups. Hence, when the characteristic of $k$ is good, the $G^F$-action on unipotent elements of $G^F$ agrees with the action on nilpotent elements of $\gl^F$. Therefore, if one is able to obtain a nilpotent element of $\gl$ which is fixed by $F$, then by applying such a map one may obtain an $F$-fixed unipotent element in $G$. In general, though, the $F$-fixed points of $F$-stable unipotent conjugacy classes in $G$ (resp. nilpotent orbits in $\gl$) split into several $G^F$-classes (resp. orbits). For $\GL_n(\ov{\F_p})$, however, this splitting does not occur for any Frobenius endomorphism (split or non-split). In Section \ref{springersection} we show how to obtain a canonical form for nilpotent elements in $\gl\l_n(k)$ which is fixed by $F$, thus yielding a canonical form for unipotent classes in the finite unitary groups. (For $F_q$ this is easy to see. Indeed, since $x \mapsto x + 1$ is a Springer morphism in this case, all entries in the image of one of our canonical elements lie in $\{0,1\}$ and are thus fixed by taking $q^{th}$-powers.)

\section{The general linear Lie algebra}

\subsection{A non-canonical representative} \label{SpSt}  In this section we set $G=\GL_n(k)$ and $\gl = \gl\l_n(k)$, but note that there is essentially no difference between this and the special linear case when one is concerned with nilpotent orbits or unipotent classes. (The situation is less straightforward in the finite setting.) We fix once and for all a nilpotent element $N \in \gl_{\nil}$ corresponding, by the Jordan canonical form, to some partition $\mu \vdash n$. We denote its $\Ad(G)$-orbit by $\Or_N$. Let $T$ and $B$ be the diagonal maximal torus and upper-triangular Borel subgroup of $G$ respectively, and denote the corresponding root system by $\Sigma$ and root spaces by $X_{\al}$ for $\al \in \Sigma$. Let $\t$ be the standard diagonal maximal torus of $\gl$, and, for $1 \le i \le n$, let $\e_i$ be the linear map $\t \rightarrow k$ which picks out the $i^{th}$ diagonal entry. We may then denote a set of positive roots by $\Sigma^+= \{ \e_i - \e_j \ |  1 \leqslant i < j \leq n \}$, such that $X_{\e_i-\e_j}$ is the root space consisting of all matrices in $\gl$ which are zero except for their $(i,j)^{th}$ entry. The corresponding simple roots may then be written as $\Pi = \{\al_1, \dots, \al_{n-1}\}$, where $\al_i = \e_i-\e_{i+1}$.

We present the main results from the Dynkin-Kostant-Springer-Steinberg theory as follows (see, e.g., [\cite{Kawa}, pp. 177--178] and the references there). (Note that, since we will only apply this in the case where $\Sigma$ is of Type $A$, an elementary proof is possible, by following, e.g., [\cite{Lus5}, \S 2].)

\begin{theorem} With the above set-up, there exists a $\Z$-grading \[\gl = \bigoplus_{i \in \Z}\gl_i\]

\noindent of $\gl$, depending only on $\Or_N$, with the following properties.

\begin{enumerate}[{\em (i)\ }]
	\item Each $\gl_i$ is a sum of root spaces.
	
	\item We may assume {\em (}by replacing by a conjugate if necessary{\em )} that $N \in \gl_2$.  
	
	\item $\pl_N = \bigoplus_{i \geq  0}\gl_i$ is the Lie algebra of a standard {\em (}block upper-triangular{\em )} parabolic subgroup $P_{N}$ of $G$.

	\item $\Ll_N = \gl_0$ is the Lie algebra of the block-diagonal Levi subgroup $L_N$ of $P_N$.
	
	\item For $i \geq  1, \ug_{N,i} = \bigoplus_{j \geq  i}\gl_i$ is the Lie algebra of a connected normal unipotent subgroup $U_{N,i}$ of $P_N$. In particular, $U_{N,1}$ is the unipotent radical of $P_N$.

	\item Each $\gl_i$ is $\Ad(L_N)$-stable.
	
	\item $\Or_N \cap \gl_2$ is dense in $\gl_2$.
	
\item There exists a unique additive function $h_N : \Sigma \rightarrow \Z$, fixed by the non-trivial graph automorphism of the Dynkin diagram of $G$, such that \begin{enumerate}[{\em (a)\ }]
	
	\item $h_N(\al) \in \{0,1,2\}$ for each $\al \in \Pi$;

	\item $\gl_{i} = \bigoplus_{h_N(\al)=i}X_{\al}.$
\end{enumerate}
\end{enumerate}

\end{theorem}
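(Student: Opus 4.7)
The plan is to follow the elementary Type~$A$ approach alluded to by the author, constructing the grading directly from an $\mathfrak{sl}_2$-triple built out of the Jordan form of $N$.

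First, I would construct a semisimple element $h \in \gl$ such that $[h,N]=2N$. Let $\mu=(\mu_1,\dots,\mu_r)$ be the Jordan partition of $N$. Choose a Jordan basis $\{v_{i,j}\}$ (with $1\leqslant i \leqslant r$ and $1\leqslant j \leqslant \mu_i$) satisfying $Nv_{i,j}=v_{i,j-1}$, and define $h$ by $hv_{i,j}=(\mu_i-2j+1)v_{i,j}$. A direct check gives $[h,N]=2N$ and $h$ has integer eigenvalues symmetric about $0$. After conjugating by a permutation we may arrange the weights of $h$ to be weakly decreasing along the standard basis, so that $h\in\t$. Set $\gl_i=\{x\in\gl:[h,x]=ix\}$. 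This yields a $\Z$-grading on $\gl$ depending (up to $\Ad(G)$) only on $\mu$, i.e.\ only on $\Or_N$.

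Next, I would read off properties (i)--(vi) from the grading. Since $h\in\t$, each $\gl_i$ is a sum of root spaces, giving (i); $N$ is in $\gl_2$ by construction, giving (ii). The subspace $\pl_N=\bigoplus_{i\geqslant 0}\gl_i$ contains $\t$ and all positive root spaces (because the weights along the diagonal are non-increasing), so it is a standard parabolic subalgebra, giving (iii); $\Ll_N=\gl_0=Z_\gl(h)$ is the Levi, giving (iv); each $\ug_{N,i}$ is an ideal in $\pl_N$ consisting of nilpotent elements, hence exponentiates to a connected normal unipotent subgroup of $P_N$, giving (v); and $\Ad(L_N)$ commutes with $\ad h$, so each $\gl_i$ is $L_N$-stable, giving (vi). For (viii), define $h_N(\al)=\al(h)$; additivity is linearity, the bound $h_N(\al_j)\in\{0,1,2\}$ follows because consecutive diagonal entries of $h$ differ by $0$, $1$, or $2$, and symmetry under the graph automorphism $\al_j\mapsto\al_{n-j}$ follows from the fact that the multiset of weights of $h$ is a union of symmetric strings and hence invariant under negation, so the palindromic diagonal yields palindromic successive differences.

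The main obstacle is (vii), the density $\overline{\Or_N\cap\gl_2}=\gl_2$. My approach is the following dimension count. Consider the morphism $\psi:L_N\to\gl_2$, $\ell\mapsto \Ad(\ell)N$, whose image lies in $\Or_N\cap\gl_2$. I would show $\dim L_N-\dim Z_{L_N}(N)=\dim\gl_2$ and then conclude using irreducibility of $\gl_2$. One side of the dimension identity uses that $Z_G(N)\subseteq P_N$ (consequence of the grading and $N\in\gl_2$), so that $Z_{L_N}(N)=Z_G(N)/(Z_G(N)\cap U_{N,1})$ and $\dim Z_G(N)=\dim L_N-\dim\gl_2+\dim\ug_{N,1}$. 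Combined with $\dim\Or_N=\dim G-\dim Z_G(N)=\dim\ug_{N,1}+\dim(\gl_{\geqslant 1}\smallsetminus \gl_{\geqslant 2})$ and the standard identity $\dim\gl_1=\dim\gl_{-1}$ for $\mathfrak{sl}_2$-representations (the hard counting step, where the characteristic $\ne 2$ hypothesis enters), this yields $\dim(L_N\cdot N)=\dim\gl_2$. Since $\gl_2$ is irreducible as an affine space, density follows.

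Finally, uniqueness/canonicity of the grading (that it depends only on $\Or_N$) is a consequence of the conjugacy of Jacobson--Morozov-type $h$'s attached to $N$ under $Z_G(N)^\circ$, which in Type~$A$ can be verified directly from the Jordan decomposition.
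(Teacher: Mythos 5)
The paper does not actually prove this theorem: it is stated as a summary of Dynkin--Kostant--Springer--Steinberg theory with pointers to Kawanaka and (for an elementary Type~$A$ proof) Lusztig \cite{Lus5}. So you are supplying a proof where the paper gives none. Your overall strategy --- construct $h$ explicitly from the Jordan basis, take the $\ad h$-eigenspace grading, and read off (i)--(vi) and (viii) directly --- is exactly the elementary Type~$A$ route the paper alludes to, and those parts of your write-up are correct. The verification that the simple-root weights lie in $\{0,1,2\}$ follows, as you implicitly use, because the multiset $Y$ contains every other integer in $[1-\mu_1,\mu_1-1]$, so consecutive sorted values cannot differ by $3$ or more; and palindromy of $(\nu_i)$ gives the graph-automorphism symmetry.

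The density argument (vii) is where your write-up breaks down. The strategy is right --- show the $L_N$-orbit of $N$ is open in $\gl_2$ --- but the intermediate dimension formulas are wrong. The identity you assert, $\dim Z_G(N)=\dim L_N-\dim\gl_2+\dim\ug_{N,1}$, is false: the correct one is $\dim Z_G(N)=\dim\gl_0+\dim\gl_1$ (check on a regular nilpotent: $\dim Z_G(N)=n$, $\dim\gl_0=n$, $\dim\gl_1=0$, whereas your formula gives $n-(n-1)+\binom{n}{2}$). Similarly $\dim\Or_N = \dim\ug_{N,1}+\dim\gl_1$ is not correct; the right statement is $\dim\Or_N = \dim\ug_{N,1}+\dim\ug_{N,2}$, equivalently $2\dim\ug_{N,1}-\dim\gl_1$. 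In fact the whole computation is an unnecessary detour. Observe that $\dim(L_N\cdot N)=\dim\gl_0-\dim\ker(\ad N|_{\gl_0})=\dim\,\mathrm{im}(\ad N:\gl_0\to\gl_2)$, so $L_N\cdot N$ is open (hence dense) in $\gl_2$ if and only if $\ad N:\gl_0\to\gl_2$ is surjective. That surjectivity is the single fact you need; in characteristic zero it is immediate from $\mathfrak{sl}_2$-representation theory, and for $\gl\l_n$ over arbitrary $k$ it can be verified directly on the Jordan basis (indeed, $\dim Z_\gl(N)=\sum_i(2i-1)\mu'_i$ is characteristic-independent, and the weight-space dimensions are combinatorial, so the surjectivity argument goes through over any $k$). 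Relatedly, the invocation of ``$\dim\gl_1=\dim\gl_{-1}$'' is not doing the work you want here (it is trivially true by $x\mapsto x^T$ and not the hard step), and the characteristic~$\neq 2$ hypothesis plays no role in this theorem: for $\GL_n$ every prime is good, as the paper itself notes, and the $\chara\ne 2$ restriction only matters later when producing the $f$-symmetric canonical form (cf.\ the Remark about $(3,1)\vdash 4$).
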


The Dynkin diagram with nodes labelled by the numbers $h(\al_i)$, corresponding to the simple roots, is called the {\em weighted Dynkin diagram associated to} $\Or_N$. We may partition $\Sigma$ into the following subsets. For $i\in \Z$ set $\Sigma_i = \{ \al \in \Sigma \ | \ h(\al)=i \} = \{ \al \in \Sigma \ | \ X_{\al} \subseteq \gl_i \}$.

We explicitly construct the function $h$ as follows. Let $\mu = (\mu_1 \geq  \mu_2 \geq  \cdots \geq  \mu_r)$. Then for each $\mu_i$, consider \[Y_i = \{\mu_i - 1, \mu_i - 3, \dots, 3 - \mu_i, 1 - \mu_i\}. \] \noindent Viewing $Y=\coprod_iY_i$ as a multiset of $n$ integers, arrange in decreasing order: \[Y = \{\nu_1 \geq  \nu_2 \geq  \cdots \geq  \nu_n \}.\] \noindent Then we define $h$ on $\Pi$ by putting $h(\al_i) = \nu_i - \nu_{i+1}$. This uniquely determines $h$.

In [\cite{Sho}, \S 2], it is shown how to construct $\Sigma_1$. Generalising this we construct $\Sigma_2$. Let $\Pi_{\e}$ be the set of simple roots with $h$-weight $\e$ for ${\e} = 1, 2$. For $\al_i \in \Pi_1$ (and, later, $\Pi_2$), let $a_i$ be the smallest integer such that $a_i > i$ and $h(\al_{a_i}) \not= 0$, and let $b_i$ be the largest integer such that $b_i < i$ and $h(\al_{b_i})\not= 0$. Then we obtain rectangular subsets \[\Psi_i = \{\e_s - \e_t \ | \  b_i + 1 \leq s \leq i, i + 1 \leq t \leq   a_i \}.\]

\noindent (If $a_i$ (resp. $b_i$) does not exist, then set $a_i=n$ (resp. $b_i=0$).) Then, as observed in [\cite{Sho}, \S 2], we have a disjoint union \[\Sigma_1 = \coprod_{\al_i \in \Pi_1}\Psi_i.\]

Now we construct $\Sigma_2$. A pair $\Psi_i, \Psi_j \subseteq \Sigma_1$ are said to be {\em adjacent} if $h(\al_k)=0$ whenever $i < k < j$. We will also say that $\al_i$ and $\al_j$ are adjacent when this is the case. For each adjacent pair $\Psi_i$, $\Psi_j$ we define another subset $\Psi_{i,j}$ of $\Sigma^+$ by  \[\Psi_{i,j} = \{ \e_s - \e_t  \ | \  b_i +1 \leq   s \leq   i, j + 1 \leq   t \leq     a_j \}.\]

\noindent Then we have another disjoint union \[\Sigma_2 = \left( \coprod_{\alpha_i, \alpha_j \in \Pi_1}\Psi_{i,j}\right) \coprod \left(\coprod_{\al_k \in \Pi_2}\Psi_k\right),\]

\noindent where the first union is taken over adjacent pairs. We shall call the $\Psi_{i,j}$ and $\Psi_k$ appearing in the above decomposition the {\em blocks} of $\gl_2$. 

We now describe $\gl_2$. Define sequences $l_1\geq  l_2 \geq  l_3 \geq  \dots$ and $k_1\geq  k_2 \geq  k_3 \geq  \dots$ (related to the dual partitions of the purely odd and purely even parts of $\mu$) as follows. For $i \geq  1$ set  \[l_i = \#\{  \mu_j \mbox{ odd } |\  \mu_j \geq  2i - 1 \},\]

\noindent and \[k_i = \#\{ \mu_j \mbox{ even } |\  \mu_j \geq  2i \}.\]

\begin{lemma}   $\gl_2$ consists of matrices of the form $x$, satisfying the following properties.

\begin{figure}[ht]		\centering

\includegraphics{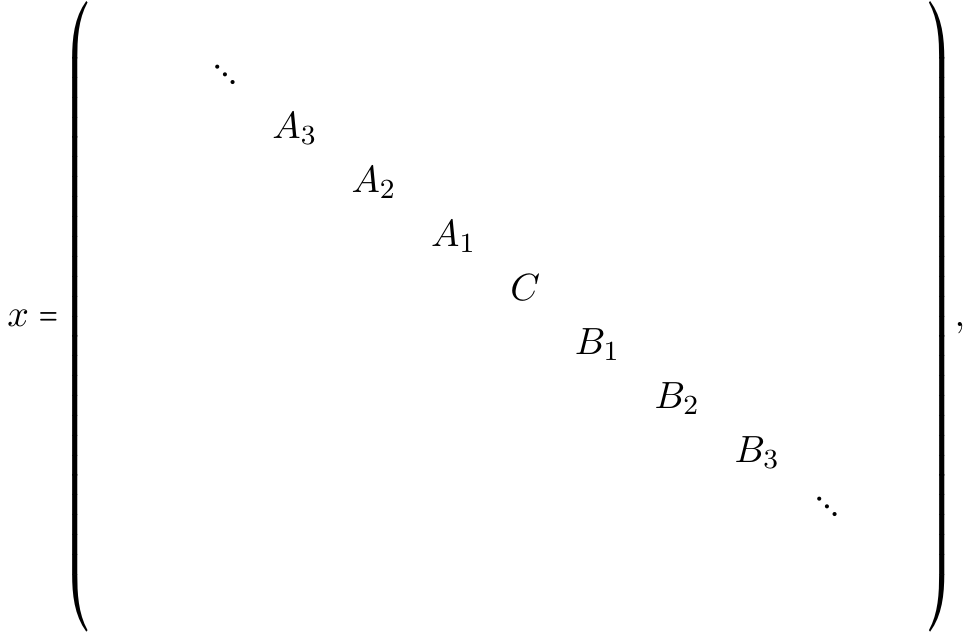}

\end{figure}

\begin{enumerate}[{\em (i)\ }]
	\item  \label{l1} All entries are zero outside the rectangular blocks $A_i, B_i, C$, and these blocks correspond to the blocks of $\gl_2$. (I.e. they are spanned by the root vectors for the roots occurring in each of the blocks of $\gl_2$.)
	\item  \label{l2} The entries $x_{i,j}$ inside the blocks are arbitrary, and for all such entries $i \leq j$.
	\item  \label{l3} The block structure is symmetric with respect to the non-main diagonal, i.e. it is fixed by the map $f : (x_{i,j})\mapsto (x_{n+1-j,n+1-i})$.
	\item  \label{l4} Each row (resp. column) intersects at most one block.
	\item  \label{l5} The middle block $C$ {\em (}which is necessarily square by the above{\em )} exists if, and only if, some $\mu_i$ is even.
	\item  \label{l6} The set of blocks may be partitioned into subsets $I=\{\dots, A_{i_2}, A_{i_1}, B_{i_1}, B_{i_2}, \dots \}$ and $J=\{\dots, A_{j_2}, A_{j_1}, C, B_{j_1}, B_{j_2}, \dots \}$ with the following properties:
\begin{enumerate}[{\em (a)\ }]
	\item $J=\emptyset$ if $C$ does not exist.
	\item Each of $I$ and $J$ is a symmetric block structure with respect to the non-main diagonal.
	\item $A_{i_r}$ is an $l_{r+1} \times l_{r}$ matrix {\em (}and hence  $B_{i_r}$ is an $l_{r} \times l_{r+1}$ matrix{\em )}.
	\item $A_{j_r}$ is a $k_{r+1} \times k_{r}$ matrix {\em (}and hence  $B_{j_r}$ is a $k_{r} \times k_{r+1}$ matrix{\em )}.
	\item Set $C = A_{j_0} = B_{j_0}$, $A_{i_0} = B_{i_1}$ and $B_{i_0} = A_{i_1}$. Then row $k$ of $x$ intersects $A_{i_r}$ {\em (}resp. $A_{j_{r}}, B_{i_{r}}, B_{j_{r}}${\em )} if, and only if, column $k$ intersects $A_{i_{r+1}}$ {\em (}resp. $A_{j_{r+1}}, B_{i_{r-1}}, B_{j_{r-1}}${\em )}.
	
	\end{enumerate}
\end{enumerate}

\end{lemma}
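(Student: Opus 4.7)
The plan is to unpack the explicit description of $\Sigma_2$ given just above the statement, as the disjoint union $\bigl(\coprod \Psi_{i,j}\bigr) \coprod \bigl(\coprod \Psi_k\bigr)$, and to translate it into matrix-entry language via $\gl_2 = \bigoplus_{\al \in \Sigma_2}X_{\al}$. Since $X_{\e_s - \e_t}$ is the one-dimensional span of the matrix unit $E_{s,t}$, each set $\Psi_{i,j}$ will correspond to the rectangular region of entries with row range $[b_i+1, i]$ and column range $[j+1, a_j]$, and each $\Psi_k$ to the region with row range $[b_k+1, k]$ and column range $[k+1, a_k]$. These are the blocks $A_i$, $B_i$, $C$ in the picture, and property (i) will be immediate; (ii) will follow because $s < t$ is automatic for any positive root $\e_s - \e_t$.

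For (iii), I would use that $h_N$ is fixed by the non-trivial graph automorphism $\tau : \al_i \mapsto \al_{n-i}$ of the Dynkin diagram, so that $\Pi_1, \Pi_2$ are $\tau$-invariant and, for any $\al_i \in \Pi_1 \cup \Pi_2$, the neighbouring indices satisfy $a_{n-i} = n - b_i$ and $b_{n-i} = n - a_i$. A short calculation will then show that $\tau$ permutes the blocks of $\gl_2$, sending $\Psi_{i,j} \leftrightarrow \Psi_{n-j, n-i}$ and $\Psi_k \leftrightarrow \Psi_{n-k}$, which translates on matrix positions to the map $f$. For (iv), I would note that if $\al_i, \al_{i'} \in \Pi_1 \cup \Pi_2$ with $i < i'$, then $b_{i'} \geq i$, so the row intervals $[b_i+1, i]$ and $[b_{i'}+1, i']$ are disjoint, and dually so are the column intervals $[i+1, a_i]$ and $[i'+1, a_{i'}]$. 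Every block of $\gl_2$ is indexed by an element of $\Pi_1 \cup \Pi_2$ for its row range and another such index for its column range, and distinct blocks use distinct row-defining (respectively column-defining) indices, so no two blocks share a row or column.

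The main obstacle will be verifying (v) and (vi), which require a careful combinatorial analysis of the weighted Dynkin diagram in terms of the parities of the parts of $\mu$. Odd parts contribute even integers to the multiset $Y$ and even parts contribute odd integers; direct counting gives that the value $2(r-1)$ (and, by the symmetry of each $Y_i$ about zero, also $-(2r-2)$) occurs exactly $l_r$ times in $\nu$, while $2r-1$ and $-(2r-1)$ each occur $k_r$ times. The $h$-weight $\nu_i - \nu_{i+1}$ is $0$, $1$, or $2$ according to whether consecutive values coincide, differ by $1$, or differ by $2$, so the runs of equal values in $\nu$ dictate the row and column dimensions of each block. The pattern will naturally decompose into an ``odd-parts tower'' contributing the blocks of $I$ with dimensions governed by the $l_r$, and an ``even-parts tower'' contributing the blocks of $J$ governed by the $k_r$; the central square block $C$ will arise precisely from the transition straddling the origin in the even-parts tower (either a $\Psi_k$ from a $+1 \to -1$ jump when only even parts are present, or a $\Psi_{i,j}$ from a $0 \to -1$ and $+1 \to 0$ mixed pair when both parities are), which happens exactly when some $\mu_i$ is even, yielding (v). The recursive dimension formulas in (vi) and the interlacing relation (e) will then follow by direct inspection of how the row and column intervals of consecutive blocks nest via the $b_i$, $a_i$ indexing.
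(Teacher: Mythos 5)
Your plan follows essentially the same route as the paper's proof: parts (i)--(iv) are read off directly from the explicit description of $\Sigma_2$ and the root spaces, (iii) comes from the graph automorphism fixing $h_N$, and (v)--(vi) reduce to a parity analysis of the multiset $Y$, with odd parts of $\mu$ contributing even integers (giving the $l_r$) and even parts contributing odd integers (giving the $k_r$). Your multiplicity count is correct: $2(r-1)$ occurs $l_r$ times and $\pm(2r-1)$ each occurs $k_r$ times, and this is what pins down the block dimensions in (vi)(c),(d). The one place where your route is slightly more explicit than the paper's is (v): the paper phrases the criterion as ``there exists a weight-2 root fixed by the graph automorphism'' and then does a three-way case check on the shape of the central weight string ($2,0,\dots,0,2$ vs.\ $1,0,\dots,0,1$ vs.\ a central $2$), whereas you read it directly off the parity of values straddling $0$ in $\nu$ --- both are correct and amount to the same thing. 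For (vi) the paper gives a concrete recipe for $I$ and $J$ (put all $\Psi_k$ with $\al_k\in\Pi_2$ together with the alternating adjacent pairs $\Psi_{m_1,m_2},\Psi_{m_3,m_4},\dots$ in one set, and the complementary adjacent pairs $\Psi_{m_2,m_3},\Psi_{m_4,m_5},\dots$ in the other, then label so that $J$ contains $C$); your ``odd-parts tower / even-parts tower'' picture is the same partition seen from the $Y$-multiplicities side, and you would need to make that alternating selection explicit to fully nail (vi)(e), but the idea is sound.
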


\begin{proof} The first four parts are clear from the construction. Observe that (\ref{l5}) is equivalent, by symmetry, to there existing a root $\al$ of weight $2$ which is fixed by the non-trivial graph automorphism $\Pi \rightarrow \Pi$.  If all $\mu_i$ are even then it is easy to see that such a root exists in $\Pi$, i.e. the central node of the Dynkin diagram. If at least one, but not all, $\mu_i$ are even then the middle part of the sequence of weights of $\Pi$ is of the form $1, 0, \dots, 0, 1$, with the two $1$s equidistant from the centre. Then $\al$ may be taken to be the sum of the roots corresponding to these $1$s and the intervening $0$s. If all the $\mu_i$ are odd then the middle part of the sequence of weights of $\Pi$ is of the form $2, 0, \dots, 0, 2, \dots$, with the two $2$s equidistant from the centre. Thus, a root fixed by the graph automorphism cannot have weight $2$.

We will now construct the sets $I$ and $J$. If $\mu$ has both odd and even parts choose $i$ to be minimal such that $\mu_i - \mu_{i+1}$ is odd. Then \[Y_i \cup Y_{i+1} = \{ \mu_i - 1, \mu_i - 3, \dots, \mu_i - (\mu_i - \mu_{i+1}), \mu_{i+1} - 1, \mu_{i+1} - 2, \mu_{i+1} - 3, \dots,\] \[3 - \mu_{i+1}, 2 - \mu_{i+1}, 1 - \mu_{i+1}, (\mu_i - \mu_{i+1}) - \mu_i, \dots, 3 - \mu_i, 1 - \mu_i \}. \]

\noindent It follows that each integer $k$, such that $\mu_{i+1} \geq  k \geq  -\mu_{i+1}$, appears with non-zero multiplicity in $Y$, and that an integer $k$ such that $\mu_{1} - 1 \geq  k \geq  \mu_{i+1}$, appears with non-zero multiplicity in $Y$ if, and only if, it has the same parity as $\mu_1 - 1$. We thus obtain a symmetric partition of $\Pi$ into three parts as follows. \[\Pi = \{\al_1, \dots, \al_{\hat{\iota}} \} \cup \{ \al_{\hat{\iota} + 1}, \dots, \al_{n - \hat{\iota}} \} \cup  \{ \al_{n + 1 - \hat{\iota}}, \dots, \al_{n - 1} \}, \]

\noindent where $\hat{\iota}$ is the largest number such that $\hat{\iota} < (n-1)/2$ and $h(\al_{\hat{\iota}}) = 2$. Assume for now that $\mu$ does not consist only of even parts. Then the blocks of $\gl_2$ of the form $\Psi_i$, which are in bijection with the simple roots of weight 2, may be split into two sets of adjacent blocks corresponding to the end parts of the above partition. The blocks of the form $\Psi_{i,j}$ are in bijection with sets of adjacent roots of weight 1 from the middle part. If $\mu$ does consist only of even parts then only blocks of the form $\Psi_i$ occur in $\gl_2$.

     Let $\al_{m_1}, \al_{m_2}, \al_{m_3}, \dots$ denote the elements of $\Pi_1$ with $m_1 \leq m_2 \leq m_3 \leq \cdots$, and set \[A = \{ \Psi_i \ | \ \al_k \in \Pi_2 \} \cup \Psi_{m_1,m_2} \cup \Psi_{m_3,m_4} \cup \Psi_{m_5,m_6} \cup \cdots\] 

\noindent and \[B = \Psi_{m_2,m_3} \cup \Psi_{m_4,m_5} \cup \Psi_{m_6,m_7} \cup \cdots.\]

\noindent It is clear that $A$ and $B$ partition the roots which determine $\gl_2$ and that each is a union of blocks. Set $\{A,B\} = \{I,J\}$ so that $J$ contains the central block. Then (\ref{l6}) follows from this construction. (See also the example below.) \end{proof}

\begin{example} If $\mu$ has only odd or only even parts then the elements of $Y$, disregarding multiplicities, are $\mu_1 - 1, \mu_1 - 3,  \dots , 3 - \mu_1, 1 - \mu_1$ and therefore the only weights that can occur are $\{0,2\}$. In this case all blocks are of the form $\Psi_i$ and we should set $I = \nolinebreak \{ \al \in \nolinebreak \Sigma\ |\ h(\al)=2\}$, $J=\emptyset$ if all parts are odd and vice versa if all parts are even.\end{example}

Given an $n \times n$ matrix $x$ and a union of blocks $X$, we will denote by $x_X$ the matrix obtained by replacing all entries of $x$ which do not correspond to $X$ by zero. If $X$ is a block, we will also refer to $x_X$ as a {\em block of } $x$. Our eventual canonical form will be an element $x \in \Or_N \cap \gl_2$, with entries in $\{0,1\}$, such that $f(x_{A_i}) = x_{B_i}$ for $i\geq  1$, and $f(x_C) = x_C$ if $C$ exists. By the density of $\Or_N \cap \gl_2$ in $\gl_2$, we may obtain a representative in any non-empty open set. In what follows, we view $\gl_2$ as an affine space in its own right, and therefore ignore coordinates of $\gl$ outside of $\gl_2$. We will use the following open set.

Let $\gM = 2^{\dim \gl_2 }{\dim \gl_2 }^{1/2}$ and then consider all polynomials $f_i$ ($i \in I$, some indexing set) in $\dim \gl_2$ indeterminates of degree at most $\gM$, with coefficients in $\{0,\pm 1\}$. Define $V = V(f_i)$ to be the zero locus of the $f_i$ and let $S= \gl_2 \setminus V$ be the corresponding open set in $\gl_2$. Hence, by density, we may choose a representative $x \in \Or_N \cap S$. (The number $\gM$ might seem rather arbitrary here but its nature will become clear later when we consider an algorithm for putting $x$ into symmetric form.)

\subsection{Making $x$ canonical} In what follows we assume that $x \in \Or_N \cap S$ is chosen in the manner explained above, and fixed. We now move on to the second phase of the proof of Theorem \ref{x_J}. This is based on the fact that the $L$-orbit of $x$ is contained in $\Or_N \cap \gl_2$. For this we will need some new terminology. We shall refer to the {\em rows} and {\em columns} of the blocks $A_i$ and $C$ as those inherited from the ambient matrix, but it will be convenient for us to invert this definition for the blocks $B_i$ (for $i \ge 1$).

\begin{lemma} For any block $X$ and any elementary row or column operation on $x_X$, there exists $l \in L$ such that conjugation by $l$ on $x$ agrees with this operation. If $X = A_{i_r}$ {\em (}resp. $A_{j_r}, B_{i_r}, B_{j_r}${\em )}, then, for row operations, $l$ can be chosen so that it acts trivially on all other blocks except $x_{A_{i_{r+1}}}$ {\em (}resp. $x_{A_{j_{r+1}}}, x_{B_{i_{r+1}}}, x_{B_{j_{r+1}}}${\em )}, and, for column operations, so that it acts trivially on all other blocks except $x_{A_{i_{r-1}}}$ {\em (}resp. $x_{A_{j_{r-1}}}, x_{B_{i_{r-1}}}, x_{B_{j_{r-1}}}${\em )}. Furthermore, these pairs of actions are described in Tables \ref{dualoperations1} and \ref{dualoperations2}. \end{lemma}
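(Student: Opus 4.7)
The key input is the $\Ad(L)$-stability of $\gl_2$ (property (vi) of the theorem) combined with the row/column pairing given by (vi)(e) of the preceding lemma. My plan is to realise each elementary operation on a block $x_X$ by conjugation by a suitable elementary matrix $l$ in a single diagonal block of the Levi $L = L_N$; the dual action on the companion block will then arise automatically from right multiplication by $l^{-1}$.

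First I would pin down how the Levi acts. Since $L$ is a standard block-diagonal Levi, its diagonal blocks $L_1, \ldots, L_s$ are indexed by consecutive intervals of $\{1, \ldots, n\}$, and each $L_m$ is a copy of some $\GL$, so every elementary matrix (transvection or scalar on a single diagonal entry) supported in the interval labelling $L_m$ already lies in $L$. For such $l$, conjugation $l x l^{-1}$ alters only two things: the rows of $x$ whose index lies in that interval (through left multiplication by $l$) and the columns of $x$ whose index lies in that interval (through right multiplication by $l^{-1}$). All other rows and columns of $x$ are untouched, and the result stays in $\gl_2$ by (vi).

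Second, I would invoke property (vi)(e) to identify the companion block. If the interval labelling some $L_m$ coincides with the row indices intersecting $A_{i_r}$, then the same interval, viewed as column indices, labels the columns intersecting $A_{i_{r+1}}$; the analogous statement holds for $A_{j_r}$, $B_{i_r}$ and $B_{j_r}$ (with the inverted row/column convention for the $B$-blocks). Hence any elementary row operation on $x_{A_{i_r}}$ is induced by some $l \in L_m$, and right multiplication by $l^{-1}$ then performs the claimed dual column operation on $x_{A_{i_{r+1}}}$, leaving every other block of $x$ invariant. The column operation case is symmetric, using the converse direction of (vi)(e), which shifts the index from $r$ to $r-1$ instead.

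Finally, the explicit entries of Tables \ref{dualoperations1} and \ref{dualoperations2} follow from translating $l \mapsto l^{-1}$ for elementary matrices. For a transvection $l = I + cE_{p,q}$ with $p \neq q$ one has $l^{-1} = I - cE_{p,q}$, so left multiplication adds $c$ times row $q$ to row $p$ while right multiplication by $l^{-1}$ subtracts $c$ times column $p$ from column $q$; the scaling case $l = I + (\lambda-1)E_{p,p}$ is analogous. Transferring these ambient operations to the intrinsic labels of each block produces the tabulated pairs. The main obstacle is the bookkeeping for the inverted row/column convention on the $B$-blocks: once the ambient action is re-expressed in the local block labels one must verify that the signs and transpositions inherited from the convention match the entries claimed in the tables rather than introducing spurious ones.
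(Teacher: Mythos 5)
Your proposal is correct and follows essentially the same approach as the paper's proof: the paper simply observes that the elementary matrix $l$ realising the operation has its non-diagonal, non-zero entries on roots of weight zero, so $l \in L$, and appeals implicitly to part (vi)(e) of the block lemma for the companion block; you spell out exactly this argument in more detail, including the explicit $l \mapsto l^{-1}$ computation that produces Tables \ref{dualoperations1} and \ref{dualoperations2} and the bookkeeping for the inverted row/column convention on the $B$-blocks.
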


\begin{table}[ht]	   \centering   	

\caption{Duality of operations for $x_I$ (general case)} 

\begin{tabular}{cc}
\hline
{\bf row operation on $x_{A_{i_r}}$} & {\bf column operation on $x_{A_{i_{r+1}}}$}  \\
{\bf (resp. $x_{A_{j_{r}}}, x_{B_{i_{r}}}, x_{B_{j_{r}}}$)} & {\bf (resp. $x_{A_{j_{r+1}}}, x_{B_{i_{r+1}}}, x_{B_{j_{r+1}}}$)} \\ [-2.5ex] & \\  \hline 
swap rows $a$ and $b$ & swap columns $a$ and $b$ \\ \hline
multiply row $a$ by $\lambda$ & multiply column $a$ by $\lambda^{-1}$ \\ \hline
add $\lambda$ times row $a$ to row $b$ & add $-\lambda$ times column $b$ to column $a$ \\ \hline
\end{tabular}

\label{dualoperations1}

\end{table}

\begin{table}[ht]	   \centering   	

\caption{Duality of operations for $x_I$ (special case)}     

\begin{tabular}{cc}
\hline
{\bf column operation on $x_{A_{i_1}}$} & {\bf column operation on $x_{B_{i_1}}$}  \\ [-2.5ex] & \\  \hline
swap columns $a$ and $b$ & swap columns $a$ and $b$ \\ \hline
multiply column $a$ by $\lambda$ & multiply column $a$ by $\lambda^{-1}$ \\ \hline
add $\lambda$ times column $a$ to column $b$ & add $-\lambda$ times column $b$ to column $a$ \\ \hline 
\end{tabular}

\label{dualoperations2}

\end{table}

\begin{proof} It is clear from Lemma \ref{SpSt} that there exists such an elementary matrix $l$ in $G$. The proof follows from the observation that the non-diagonal, non-zero entries of $l$  correspond to roots of weight zero; thus $l \in L$ by Theorem \ref{SpSt}.\end{proof}

This allows us to consider $x_I$ and $x_J$ separately.

\subsection{Symmetrising $x_I$}  \label{rankremark}

First consider the central pair of blocks of $x_I$. Rather than using the column numbering from the ambient matrix, we shall translate this for ease of notation. Our set-up is as in Figure \ref{centralpair}.

\begin{figure}[ht]		\centering

\includegraphics{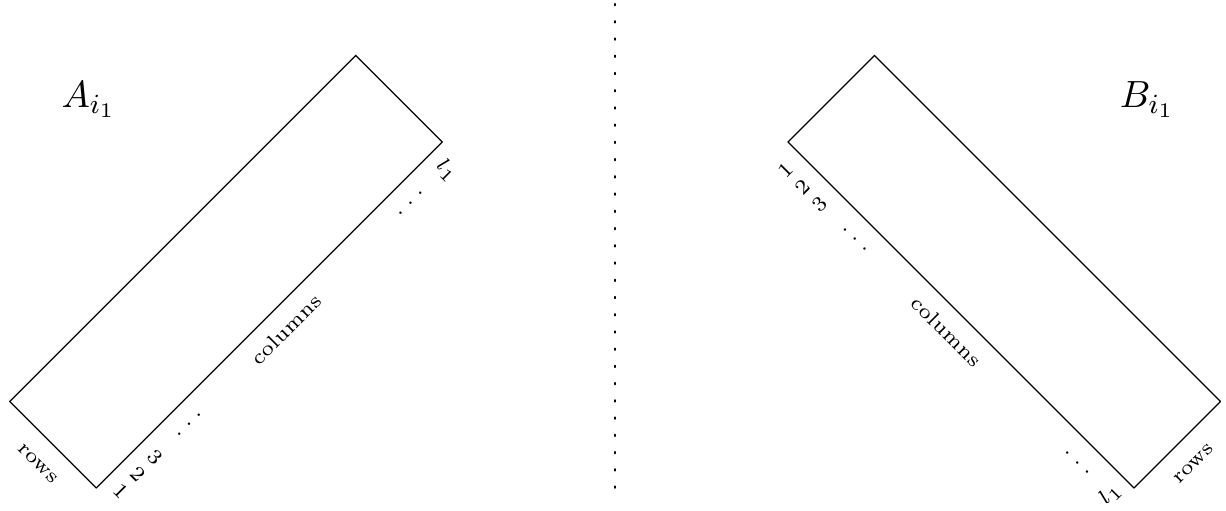}

\caption{Central pair of blocks in $x_I$}

\label{centralpair}

\end{figure}

For $m \ge 1$, we define an $m \times m$ matrix $J_m$ as follows: $(J_m)_{i,j} = 1$ if $i+j = m + 1$, and $0$ otherwise.

Now the column operations are in duality as in Table \ref{dualoperations2}. Using these dual operations, together with arbitrary row operations, we may obtain Figure \ref{centralpair2}, where the dotted lines in the blocks denote diagonal arrays of $l_2$ ones and the blank space zeros. This is achieved as follows.

\begin{enumerate}
	\item Perform Gauss-Jordan elimination to put $A_{i_1}$ in the desired form. \label{one}
	
	\item Perform row operations until the rightmost square of $B_{i_1}$ is $J_{l_2}$. \label{two}
	
	\item Using column operations in $B_{i_1}$, delete all entries not in this rightmost square.

\end{enumerate}

\begin{figure}[ht]		\centering

\includegraphics{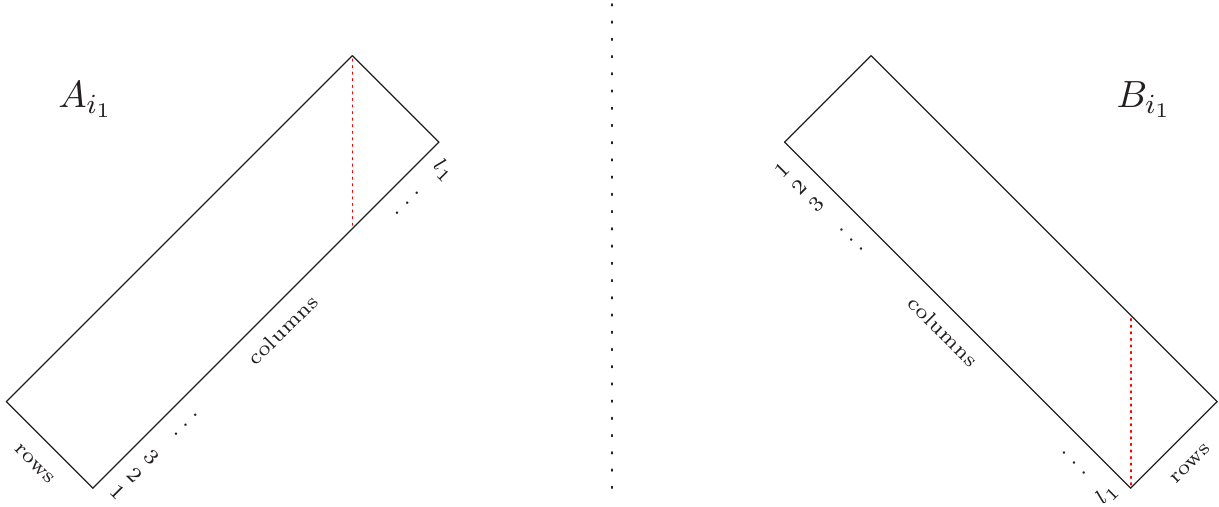}

\caption{Central pair of blocks in $x_I$} 

\label{centralpair2}

\end{figure}


\begin{rmk}  Implicit in \ref{two} is that the rank of the rightmost $l_2 \times l_2$ sub-matrix of $B_{i_1}$ has remained maximal (i.e. equal to $l_2$) throughout \ref{one}. This is valid because of the way $S$ was constructed. Indeed, first note that the initial representative $x$ has this property, or else a determinant polynomial will be satisfied. Proceeding by induction, assume that we have completed a certain number of steps of the Gauss-Jordan algorithm, and denote the resulting $B_{i_1}$-component by $x_{B_{i_1}}$ and the resulting rightmost $l_2 \times l_2$ sub-matrix of $B_{i_1}$ by $x_Z$. Let the inductive hypothesis be that the determinant of every square sub-matrix of $x_{B_{i_1}}$ may be written as a Laurent polynomial in the entries of the initial representative $x$, with coefficients in $\{ 0, \pm 1 \}$. Letting $x'_Z$ denote our sub-matrix after one more elementary column operation on $A_{i_1}$, we may write \begin{equation}  \det x'_Z =  \det x_Z + \lambda \det x_Y, \end{equation}

\noindent if the operation results in a column from outside $x_Z$ being added into $x_Z$, where $x_Y$ is another sub-matrix of $x_{B_{i_1}}$ and $\lambda$ is as in Table \ref{dualoperations2}. Or, \begin{equation}  \det x'_Z =  \lambda \det x_Z, \end{equation}

\noindent if we encounter an internal column operation on $x_Z$. Then one checks that $\lambda$ is either $\pm 1$ or a product of entries from $A_{i_1}$ and their inverses, up to sign. It follows that $\det x'_Z$ is a Laurent polynomial in the entries of the initial representative $x$, by the inductive hypothesis. If this vanishes then we may construct a polynomial in the entries of the initial representative $x$, with coefficients in $\{ 0, \pm 1 \}$ which also vanishes. This contradicts our choice of set $S$, since one may check that the degree of the polynomial will be lower than the bound $\gM$.  \end{rmk}

Next, if $l_2$ is even, then move the columns corresponding to the left half of the copy of $J_{l_2}$ in $A_{i_1}$ to the far left of $A_{i_1}$ using swapping operations. Together with the dual actions on $B_{i_1}$, we obtain the symmetric Figure \ref{centralpair3}, where the dots denote a diagonal array of $l_2/2$ $1$s.

\begin{figure}[ht]		\centering

\includegraphics{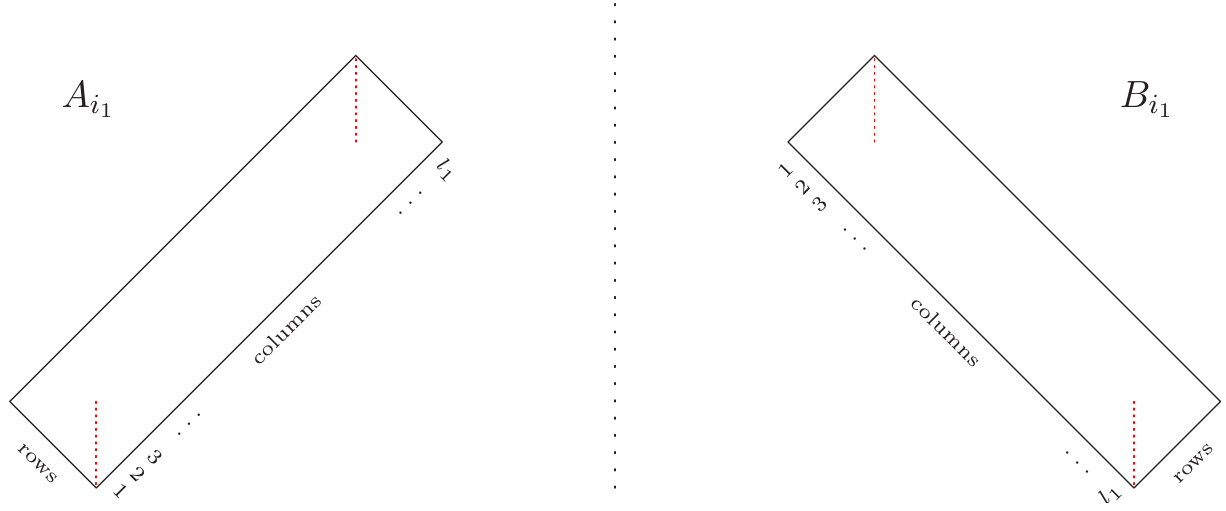}

\caption{Symmetrised central pair of blocks in $x_I$ ($l_2$ even)}

\label{centralpair3}

\end{figure}

If $l_2$ is odd, the above step will clearly not work. In this case, we do not seek symmetry immediately. Rather, perform the column swaps using the leftmost $(l_2-1)/2$ columns of $I$ in $A_{i_1}$. The result will be asymmetric, but the only asymmetries will be that column $l_1 + 1 - (l_2+1)/2$ of $A_{i_1}$ has a $1$ in the middle while column $(l_2+1)/2$ of $B_{i_1}$ consists of $0$s, and vice versa, as in Figure \ref{centralpair4}, where the filled circle denotes a $1$ and the blank circle a $0$.

\begin{figure}[ht]		\centering

\includegraphics{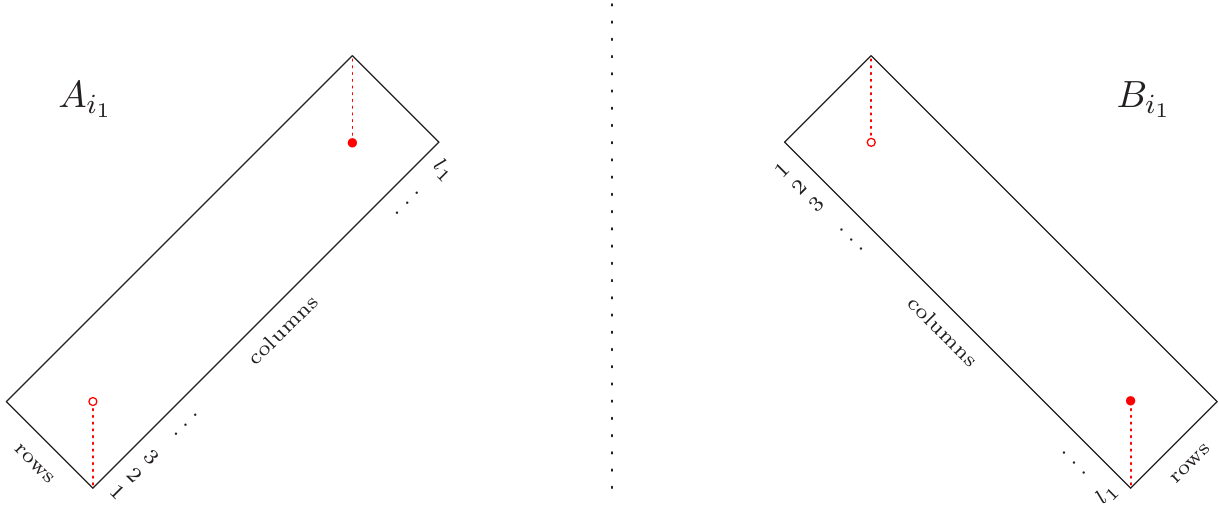}

\caption{Central pair of blocks in $x_I$ ($l_2$ odd)} 

\label{centralpair4}

\end{figure}

For $x_I$ in general we consider the sequence of pairs \begin{equation} \label{pairsequence}(A_{i_1},B_{i_1}), (A_{i_2},B_{i_2}), (A_{i_3},B_{i_3}), \dots\end{equation} in order. I.e. we iteratively move out from the centre. We now explain how each of these can be reduced to the case already dealt with. Any elementary column operation on a block in a new pair $(A_{i_k},B_{i_k})$ will inevitably induce a row operation on its neighbour (with subscript $i_{k-1}$, currently described by Figure \ref{centralpair3} or \ref{centralpair4}), thus knocking it out of canonical form. However, there exists a single elementary column operation on the latter which rectifies this. Then we apply the same process to its neighbour and so on until we reach the other member of $(A_{i_k},B_{i_k})$. This creates a duality of operations on $(A_{i_k},B_{i_k})$ which agrees with Table \ref{dualoperations2}, and so we reduce to the central pair case without damaging the pairs of blocks in between. It might be helpful to view the intervening blocks as a mirror along which one reflects. Eventually, all pairs of blocks will be as in Figure \ref{centralpair3} or \ref{centralpair4}. To achieve overall symmetry, we must now address those of the form Figure \ref{centralpair4}.

For simplicity of notation consider the central pair case first. We obtain the symmetrised form, Figure \ref{centralpair5}, by the following sequence of operations (together with their duals).

\begin{enumerate}
	\item Add column $l_1 + 1 - (l_2+1)/2$ to column $(l_2 + 1)/2$ in $A_{i_1}$.
	
	\item Add $1/2$ times column $l_1 + 1 - (l_2+1)/2$ to column $(l_2 + 1)/2$ in $B_{i_1}$.
	
	\item Multiply row $(l_2+1)/2$ of $B_{i_1}$ by $2$.
	
	\item Multiply column $l_1 + 1 - (l_2+1)/2$ of $A_{i_1}$ by $2$.
	
	\item One may also need to multiply the central rows of $A_{i_2}, A_{i_3}, A_{i_4},\dots$ by $2$ depending on the parity of the $l_1, l_2, l_3, \dots$.
\end{enumerate}

The same method works for pairs as in Figure \ref{centralpair4} in general, however, in order to exploit the mirror property at each stage we must take the sequence (\ref{pairsequence}) in the opposite order. This results in the desired canonical form for $x_I$.

\begin{figure}[ht]		\centering

\includegraphics{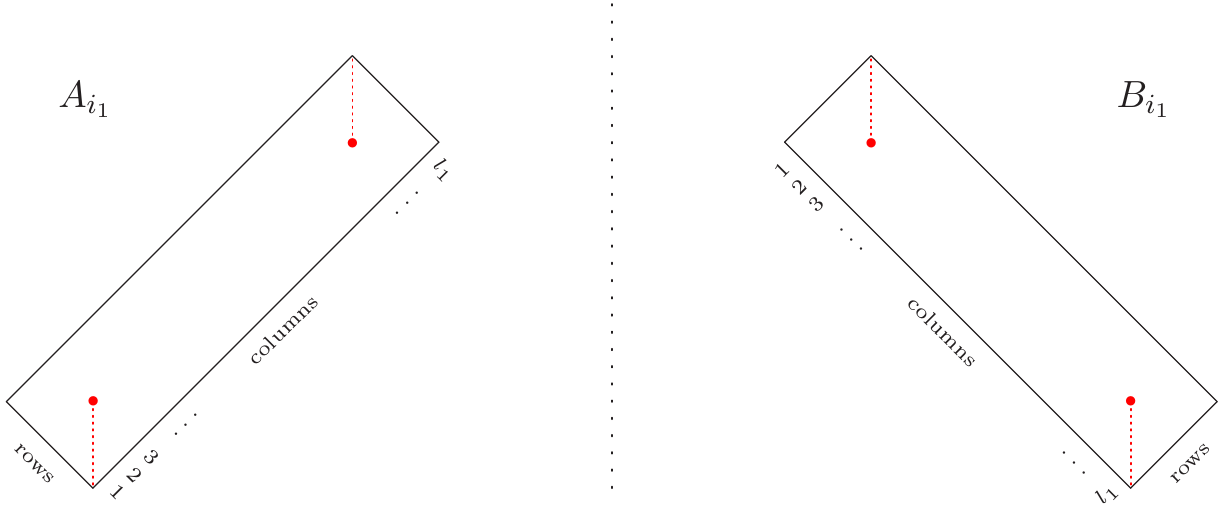}

\caption{Symmetrised central pair of blocks in $x_I$ ($l_2$ odd)} 

\label{centralpair5}

\end{figure}

\subsection{Symmetrising $x_J$}  \label{x_J}

A slightly ugly part of the symmetrising algorithm in the last section was the fact that asymmetric pairs, as in Figure \ref{centralpair4}, may be part of the central mirror arrangement that is built up, necessitating a second phase to the algorithm. The algorithm for symmetrising $x_J$ also uses a mirror arrangement of intervening blocks, but symmetrising $x_J$ is more straightforward as only one phase is needed. However, the presence of a central square block requires a slightly different calculus of dual operations.

First use arbitrary operations to transform $C$ into $J_{k_1}$, as in Figure \ref{centralblock1}. Because of this, for any elementary operation on the left of the central block, there exists an elementary operation on the right which cancels it out, and vice versa. Combining this duality with the usual duality on adjacent blocks described in Table \ref{dualoperations1}, Table \ref{dualoperations3} gives a duality of column operations on the pair $(x_{A_{j_1}}, x_{A_{j_1}})$ in $x_J$. Notice that, because of the way we have labelled the columns in Figure \ref{centralblock1}, Tables \ref{dualoperations2} and \ref{dualoperations3} are identical. Now we will show how to obtain Figure \ref{centralblock1}, which acts as a mirror for operations on $(x_{A_{j_2}}, x_{A_{j_2}})$.

\begin{table}[ht]	     \centering   	

\caption{Duality of operations for $x_J$}          

\begin{tabular}{cc}
\hline
{\bf column operation on $x_{A_{j_1}}$} & {\bf column operation on $x_{B_{j_1}}$}  \\ [-2.5ex] & \\  \hline  
swap columns $a$ and $b$ & swap columns $a$ and $b$ \\ \hline
multiply column $a$ by $\lambda$ & multiply column $a$ by $\lambda^{-1}$ \\ \hline
add $\lambda$ times column $a$ to column $b$ & add $-\lambda$ times column $b$ to column $a$ \\ \hline 
\end{tabular}

\label{dualoperations3}

\end{table}

Using Table \ref{dualoperations3}, we may symmetrise the pair $(x_{A_{j_1}}, x_{B_{j_1}})$ using the following operations (and their duals). One may check that the result is described by Figure \ref{centralblock1}.

\begin{enumerate}

	\item Put $x_{A_{j_1}}$ in the desired form using column operations.
	
	\item Obtain the identity matrix on the leftmost part of $x_{B_{j_1}}$ using row operations.
	
	\item Add suitable scalar multiples of the columns of this identity matrix to eliminate the rest of $x_{B_{j_1}}$.

\end{enumerate}

\begin{figure}[ht]		\centering

\includegraphics{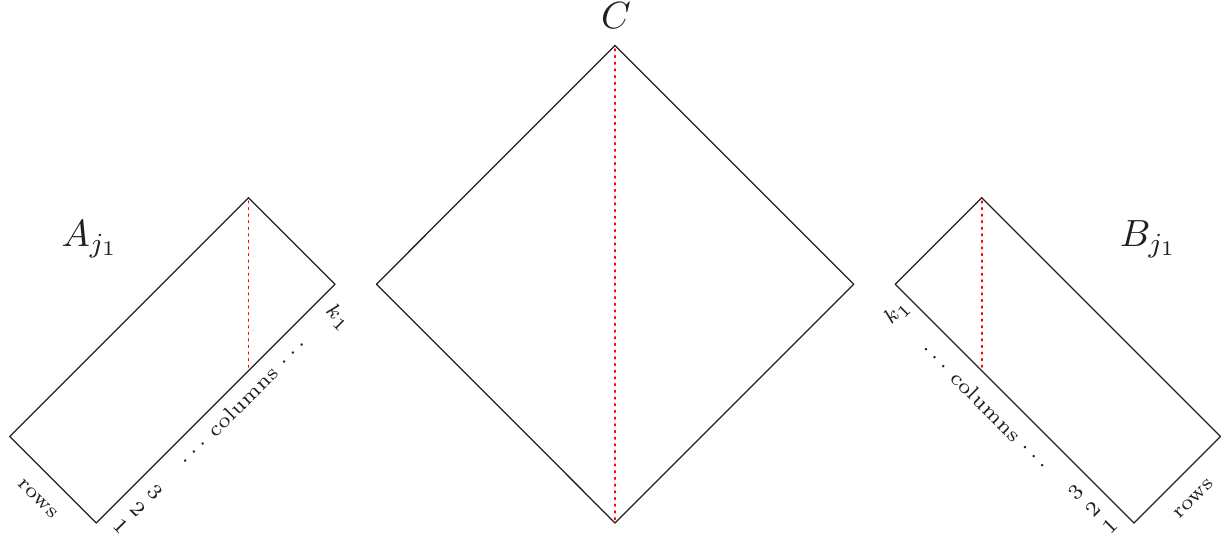}

\caption{Symmetrised central arrangement in $x_J$} 

\label{centralblock1}

\end{figure}

It is clear that this configuration allows Table \ref{dualoperations3} and the above algorithm to be extended to each of the pairs in the sequence \begin{equation} \label{pairsequence2}(A_{j_1},B_{j_1}), (A_{j_2},B_{j_2}), (A_{j_3},B_{j_3}), \dots\end{equation} in order. This completes the proof of our main result, which we now state precisely.

\begin{theorem} Let $\mu \vdash n$, and let $\Or_{\mu} \subset \gl\l_n(k)$ denote the nilpotent orbit corresponding, via the Jordan canonical form, to $\mu$. Considering the blocks described by Lemma \ref{SpSt} let $x$ denote the following matrix:

\begin{enumerate}[{\em (i)\ }]
	\item \label{mainone} The entries of $x$ agree with Figure \ref{centralpair3} if $l_2$ is even, and Figure \ref{centralpair5} if $l_2$ is odd, on blocks $A_{i_1}, B_{i_1}$ and Figure \ref{centralblock1} on blocks $A_{j_1}, B_{j_1}$ and $C$, if the latter exist.

	\item For $k \geq  2$, the entries of $x$ corresponding to $A_{i_k}, A_{j_k}$, $B_{i_k}$ and $B_{j_k}$ are obtained in the same manner as $A_{i_1}, A_{j_1}$, $B_{i_1}$ and $B_{j_1}$.
	
	\item All other entries are zero.
\end{enumerate}
 
\noindent Then $x \in \Or_{\mu}$.
 
\end{theorem}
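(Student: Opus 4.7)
The plan is to start with a generic element $x_0 \in \Or_N \cap S$, which is nonempty by property~(vii) of the Dynkin--Kostant theorem together with the density of the complement of the variety $V$ used to define $S$, and then exhibit a sequence of $L_N$-conjugations that transforms $x_0$ into the matrix $x$ described in the statement. Since $\Or_N = \Or_\mu$ and every $L_N$-conjugation preserves this orbit, the theorem will follow once the algorithms of Subsections~2.3 and~2.4 have been shown to produce $x$ starting from $x_0$.

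First I would invoke the elementary-operations lemma to realise each elementary row or column operation on any block as conjugation by some $l \in L_N$. Because the duality tables (Tables~\ref{dualoperations1}--\ref{dualoperations3}) ensure that a row or column operation on a block belonging to $I$ (respectively $J$) induces operations only on other blocks of $I$ (respectively $J$), the two sub-problems of symmetrising $x_I$ and $x_J$ decouple and can be attacked independently. For $x_I$ I would run the central-pair reduction, arriving at Figure~\ref{centralpair3} or Figure~\ref{centralpair5} according to the parity of $l_2$, and then propagate outward along the sequence~\eqref{pairsequence}, exploiting the mirror property of the already-symmetrised inner pairs to reduce each outer pair $(A_{i_k}, B_{i_k})$ to the central case. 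For $x_J$ the same idea applies, now using the central block transformed to $C = J_{k_1}$ as the initial mirror for $(A_{j_1}, B_{j_1})$ (Figure~\ref{centralblock1}), and iterating along~\eqref{pairsequence2}. Gluing the resulting $x_I$- and $x_J$-components reproduces exactly the matrix $x$ described by conditions (i)--(iii).

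The main obstacle will be verifying that the Gauss--Jordan-like sub-routines never stall. In particular, the step which transforms the rightmost $l_2 \times l_2$ sub-matrix of $B_{i_1}$ into $J_{l_2}$ requires that this sub-matrix retain maximal rank throughout all of the preceding row operations used to reduce $A_{i_1}$, and strictly analogous non-vanishing conditions arise for each outer pair and for the $x_J$ algorithm. I would handle these uniformly by an induction on the number of elementary operations performed: at each stage every relevant sub-determinant is expressible as a Laurent polynomial in the entries of $x_0$, with coefficients in $\{0, \pm 1\}$ and total degree strictly less than $\gM = 2^{\dim \gl_2}(\dim \gl_2)^{1/2}$, precisely by the inductive recipe indicated in the remark of Subsection~2.3. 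Its vanishing would therefore place $x_0$ in $V$, contradicting the choice $x_0 \in S$. Once this uniform non-degeneracy is established, each phase of both algorithms terminates successfully, the output matches the matrix $x$ prescribed in (i)--(iii), and the membership $x \in \Or_\mu$ follows.
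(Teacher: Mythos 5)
Your proposal reproduces the paper's own argument: choose a generic representative $x_0 \in \Or_N \cap S$ (using density and the construction of $S$ as the complement of a locus of bounded-degree polynomials), realise elementary operations on blocks as $L_N$-conjugations, symmetrise $x_I$ and $x_J$ separately via the duality tables and mirror propagation, and justify non-degeneracy of the Gauss--Jordan sub-routines by the Laurent-polynomial degree bound $\gM$. This is the same decomposition and the same key lemmas as in Subsections 2.1--2.4, so the argument matches the paper's proof in both structure and substance.
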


\begin{example} We illustrate the symmetrising algorithm via the orbit corresponding to the partition $\lambda = (4,4,2)$ in $\gl\l_{10}(k)$. Since all parts of $\lambda$ are even, $I= \emptyset$, therefore we may skip straight to Subsection \ref{x_J}. We have three blocks as in the following illustration, $A_{j_1}, C = A_{j_0} = B_{j_0}$, and $B_{j_1}$. (So $a_{1,1}$ is in the $1,3$ position of $x$ and $b_{3,2}$ is in the $8,10$ position.) Recall that the initial element $x$ was chosen to be in $\Or_{\lambda}$ and the specially constructed open set $S$. In step $(a)$ we have performed Gauss-Jordan elimination to put $C$ into the desired form. Note that $A_{j_1}$ and $B_{j_1}$ will still have maximal rank because of the way we chose $S$. In step $(b)$ we have performed column operations to obtain the desired form on $A_{j_1}$. Recall that every time we perform a column operation on $A_{j_1}$ this induces a row operation on $C$, which is then put back into the desired from by a suitable column operation on $C$. $B_{j_1}$ may change in the process, but its rank will remain maximal. Thus, in step $(c)$ we may use row operations on $B_{j_1}$ (remember that the rows of $B_{j_1}$ correspond to columns of the ambient matrix) to obtain the penultimate array shown. Finally, in step $(d)$ we have subtracted $b''_{3,2} \times$ column $3$ from column $1$ and we have subtracted $b''_{3,1} \times$ column $2$ from column $1$ in $B_{j_1}$ to obtain the desired form. Of course these will filter through $C$ again, but the effect on $A_{j_1}$ will be benign --- we will have added multiples of column 1 to columns 2 and 3, but column 1 consists of zeros.

{ \tiny \[  x = \begin{matrix}
 \ \ \ & \ \ \ &a_{1,1}&a_{1,2}&a_{1,3}&&&&&  \\
    &&a_{2,1}&a_{2,2}&a_{2,3}&&&&&  \\
      &&&&&c_{1,1}&c_{1,2}&c_{1,3}&&  \\
        &&&&&c_{2,1}&c_{2,2}&c_{2,3}&&  \\
          &&&&&c_{3,1}&c_{3,2}&c_{3,3}&&  \\
            &&&&&&&&b_{1,1}&b_{1,2}  \\
              &&&&&&&&b_{2,1}&b_{2,2}  \\
                &&&&&&&&b_{3,1}&b_{3,2}  \\
                  &&&&&&&&&  \\
                    &&&&&&&&&
 \end{matrix}\ \ \]}

{ \tiny \[ \stackrel{(a)}{\longrightarrow}\ \ \begin{matrix}
 \ \ \ & \ \ \ &a'_{1,1}&a'_{1,2}&a'_{1,3}&&&&&  \\
    &&a'_{2,1}&a'_{2,2}&a'_{2,3}&&&&&  \\
      &&&&&0&0&1&&  \\
        &&&&&0&1&0&&  \\
          &&&&&1&0&0&&  \\
            &&&&&&&&b'_{1,1}&b'_{1,2}  \\
              &&&&&&&&b'_{2,1}&b'_{2,2}  \\
                &&&&&&&&b'_{3,1}&b'_{3,2}  \\
                  &&&&&&&&&  \\
                    &&&&&&&&&
 \end{matrix} 
\]}

{ \tiny \[  \stackrel{(b)}{\longrightarrow}\ \ \begin{matrix}
 \ \ \ & \ \ \ &0&0&1&&&&&  \\
    &&0&1&0&&&&&  \\
      &&&&&0&0&1&&  \\
        &&&&&0&1&0&&  \\
          &&&&&1&0&0&&  \\
            &&&&&&&&b''_{1,1}&b''_{1,2}  \\
              &&&&&&&&b''_{2,1}&b''_{2,2}  \\
                &&&&&&&&b''_{3,1}&b''_{3,2}  \\
                  &&&&&&&&&  \\
                    &&&&&&&&&
 \end{matrix} \ \ \stackrel{(c)}{\longrightarrow}\ \  \begin{matrix}
 \ \ \ & \ \ \ &0&0&1&&&&&  \\
    &&0&1&0&&&&&  \\
      &&&&&0&0&1&&  \\
        &&&&&0&1&0&&  \\
          &&&&&1&0&0&&  \\
            &&&&&&&&0&1  \\
              &&&&&&&&1&0  \\
                &&&&&&&&b'''_{3,1}&b'''_{3,2}  \\
                  &&&&&&&&&  \\
                    &&&&&&&&&
 \end{matrix} \ \ \]}

{ \tiny \[ \stackrel{(d)}{\longrightarrow}\ \  \begin{matrix}
 \ \ \ & \ \ \ &0&0&1&&&&&  \\
    &&0&1&0&&&&&  \\
      &&&&&0&0&1&&  \\
        &&&&&0&1&0&&  \\
          &&&&&1&0&0&&  \\
            &&&&&&&&0&1  \\
              &&&&&&&&1&0  \\
                &&&&&&&&0&0  \\
                  &&&&&&&&&  \\
                    &&&&&&&&&
 \end{matrix} 
\]}

\end{example}

\begin{rmk} In order to symmetrise pairs of blocks described by Figure \ref{centralpair4}, we implicitly assumed that $2 \not= 0$. In fact, this is crucial since, when the characteristic is not $2$, the matrix {\footnotesize \[\left( \begin{array}{cccc}  \label{char2}
\ \ \   & \ 1\  & \ 1\  &     \\
\    &   &   & \ 1\    \\
\    &   &   & 1   \\
\    &   &   &     \end{array} \right)\]}

\noindent is the symmetric canonical form corresponding to $(3,1)\vdash 4$. (Moreover, one may check that any symmetric matrix in $\gl_2$ (defined by $(3,1)$) is in the same orbit as the above.) However, in characteristic $2$, it has Jordan form $(2,2)$. Using a computer we have also found similar examples for $n= 5,6$ and $7$ in characteristic $2$. \end{rmk}

\section{The symplectic and orthogonal Lie algebras}

We now consider the other classical algebras. More precisely, let $G$ be a simple classical algebraic group of Type $\Btype_l$, $\Ctype_l$ or $\Dtype_l$, together with the adjoint action on the nilpotent variety $\gl_{\nil}$ of its Lie algebra $\gl$. By considering the natural matrix representation of $\gl$ we may compute the set of elementary divisors of an element of each orbit, thus defining a partition of $2l$ for groups of Type $\Ctype_l$ and $\Dtype_l$ and of $2l+1$ for groups of Type $\Btype_l$. Letting $n=2l$ or $2l+1$ accordingly, the corresponding map \[\{\mbox{ orbits of $\gl$ }\} \longrightarrow \mathcal{P}(n)\]

\noindent is an injection if $G$ is of Type $\Btype_l$ or $\Ctype_l$, while for a group of Type $\Dtype_l$ very even partitions (i.e. those consisting of only even parts, each having even multiplicity) correspond to two orbits. If $G$ is of Type $\Btype_l$ or $\Dtype_l$ then the image consists precisely of those partitions in which even parts occur with even multiplicity, while the image for Type $\Ctype_l$ consists of those partitions in which odd parts occur with even multiplicity. We shall refer to the partitions not in the image as {\em bad}. Using this classification of orbits we may use the notation $\Or_{\mu}$ to denote an orbit corresponding to a partition $\mu$. (It is customary, when $\mu$ is very even, to denote the two orbits by $\Or_{\mu}', \Or_{\mu}''$.)

Let $M$ be an $n \times n$ matrix over $k$. Then the set of $n \times n$ matrices $X$ over $k$ satisfying the condition \begin{equation} \label{Liecond} X^TM + MX = 0,\end{equation} \noindent is a Lie algebra under the commutator operation. We construct the classical algebras by selecting a suitable $M$, following the standard text of Carter \cite{Car}. (But note that our choices of $M$ differ slightly from those in \cite{Car}.)

By considering the restriction of $\GL_n(k)$-orbits on $\gl\l_n(k)$ we will show how to obtain a canonical representative for each of these orbits by modifying slightly the symmetric canonical form from Theorem \ref{x_J} so that it satisfies (\ref{Liecond}). 

Clearly it is impossible to modify a canonical element $x$ (as in Theorem \ref{x_J}) corresponding to a bad partition so that it satisfies (\ref{Liecond}). In view of this we start by observing a characterisation of bad partitions in terms of a feature of the block structure of $\gl_2$. Then, assuming the absence of this feature we present a sequence of elementary operations on $x$ so that $x$ satisfies (\ref{Liecond}). By Lemma \ref{SpSt} this characterisation is as follows. For Type $\Ctype_n$ the bad partitions correspond to the existence of a block in $I$ with an odd number of rows. For Types $\Btype_n$ and $\Dtype_n$ they correspond to the existence of a block in $J$ with an odd number of rows.

\subsection{Type $\Ctype_l$} Here we let \begin{equation} \label{C_nM} M = \left(\begin{array}{cc}
 & J_l \\
-J_l & \end{array} \right).\end{equation}

\noindent Then, writing \[X = \left(\begin{array}{cc}
X_{11} & X_{12} \\
X_{21} & X_{22} \end{array} \right),\]

\noindent in terms of $l \times l$ blocks, $X \in \s\pl_{2l}(k)$ if, and only if, $X_{12} = f(X_{12})$, $X_{21} = f(X_{21})$ and $X_{11} = -f(X_{22})$.

The canonical form $x$ from Theorem \ref{x_J} already satisfies the conditions on $X_{12}$ and $X_{21}$. It suffices, therefore, to change all non-zero entries of $X_{22}$ from $1$ to $-1$. Now the absence of bad partitions means that blocks of the form Figure \ref{centralpair5} can not occur, and so there is at most one $1$ in each row. It follows that we can rescale the non-zero entries of $x$ independently using row operations and thus obtain the desired form. This is illustrated in Table 6.

\subsection{Type $\Dtype_l$} Now let $M = J_{2l}$. Then, writing \[X = \left(\begin{array}{cc}
X_{11} & X_{12} \\
X_{21} & X_{22} \end{array} \right),\]

\noindent in terms of $l \times l$ blocks, $X \in \s\ol_{2l}(k)$ if, and only if, $X_{12} = -f(X_{12})$, $X_{21} = -f(X_{21})$ and $X_{11} = -f(X_{22})$.

This time more work is required since rescaling alone will not be sufficient to satisfy the condition on $X_{12}$, if $C$ exists, as it will have non-zero entries fixed by $f$. We therefore begin by obtaining a new form for $C$ such that $x_C = -f(x_C)$. The new $x_C$ will still be a permutation matrix, although it will now have all entries on the diagonal fixed by $f$ equal to 0. First observe that we may perform operations on the top $k_1 - k_2$ columns of the existing $x_C$ without changing any other entry of $x$. We may therefore reverse the order of these columns. Similarly, one may perform operations on the bottom $k_2 - k_3$ columns of $x_C$ without changing any other entry of $x$, via the mirror afforded by $B_{j_1}$. Hence, we may reverse the order of these columns too. We continue this process until we have transformed $x_C$ into a a matrix with copies of (various sized) identity matrices lined up along the diagonal fixed by $f$, with zeros elsewhere, and the rest of $x$ left unchanged. We may now rescale some of the entries of $x_C$ to $-1$ so that $x_C = -f(x_C)$, {\em provided} that each of the numbers $k_1 - k_2, k_2 - k_3, k_3 - k_4, \dots$ is even, i.e. provided that $\mu$ is not bad.

To finish one just rescales all non-zero entries of the $B$-blocks from $1$ to $-1$. For this we simply multiply all rows by $-1$ on $B_{i_1}, B_{i_3}, B_{i_5}, \dots  $ and $B_{j_1}, B_{j_3}, B_{j_5}, \dots  $.

\begin{rmk} In the case that $\mu$ is very even this only corresponds to one of the two orbits associated to $\mu$. \end{rmk}

\subsection{Type $\Btype_l$} We let $M = J_{2l+1}$, and write \[X = \left(\begin{array}{ccc}
X_{11} & X_{12} & X_{13} \\
X_{21} & X_{22} & X_{23} \\
X_{31} & X_{32} & X_{33} \end{array} \right),\]

\noindent where $X_{11}, X_{13}, X_{31}$ and $X_{33}$ are $l \times l$ matrices, $X_{12}$ and $X_{32}$ are $l \times 1$ matrices, $X_{21}$ and $X_{23}$ are $1 \times l$ matrices, and $X_{22}$ is a $1 \times 1$ matrix. Hence, $X \in \s\ol_{2l+1}(k)$ if, and only if, $X_{13} = -f(X_{13})$, $X_{31} = -f(X_{31})$, $X_{22} = 0$, $X_{11} = -f(X_{33})$, $X_{21} = -f(X_{32})$, and $X_{12} = -f(X_{23})$. 

The canonical form is obtained in exactly the same manner as for Type $\Dtype_l$.

\section{Springer morphisms and unipotent conjugacy} \label{springersection}

In this section we explain how to compute canonical unipotent elements corresponding to the nilpotent ones we obtained previously. Let $G$ be any connected reductive group, defined over the field with $q$ elements, where $q$ is a power of a good prime for $G$. Let $F$ be the corresponding Frobenius endomorphism. Then there exists an $\Ad(G)$-compatible Frobenius endomorphism on $\gl=\Lie(G)$, which we also denote by $F$. I.e. $F(g\cdot x) = \nolinebreak F(g)\cdot \nolinebreak F(x)$ for $g\in G$, $x \in \gl$, where $\cdot$ denotes the adjoint action. (E.g. if $G$ is a classical group and $F = F_q$ --- the map which acts by raising entries in the natural representation of $G$ to the $q^{th}$-power --- then the map defined in the same way on $\gl$ is compatible with $F$.) Now let $G= \GL_n(k)$ or one of the groups \[\{x \in \GL_n(k) \ | \  x^TMx = M \}, \]

\noindent where $M$ is as in the previous chapter. In this setting there exists a $G$-equivariant bijective morphism, a Springer morphism,  \[ s: G_{\uni} \longrightarrow \gl_{\nil}\]

\noindent which commutes with the $F$-actions [\cite{SpSt}, Theorem III.3.12]. In fact, we can write out such a map explicitly for classical groups following \cite{Kawa}. For $G=\GL_n(k)$, together with $F_q$, we may take Springer's morphism to be $x \mapsto x - 1$. For Types $\Btype$, $\Ctype$ and $\Dtype$, with untwisted Frobenius endomorphisms, it is easy to check that the Cayley map $x \mapsto (x-1)(x+1)^{-1}$ works.

\subsection{Finite unitary groups}  \label{unitarysection} Assume, initially, that $k$ is a field of characteristic at least $3$. We now turn our attention to the unipotent elements in the finite groups $G^F$. When $G$ has a disconnected centre, or is of Type $\Btype, \Ctype$ or $\Dtype$, the $F$-stable unipotent classes may split into several $G^F$-orbits and so there are more unipotent conjugacy classes in $G^F$ than in $G$. Using a Springer morphism we may therefore map our nilpotent representatives from $\gl$ (noting that they are all $F$-stable) to obtain some unipotent representatives in $G^F$, but more work would be needed to obtain a {\em full} set of representatives. However, if $G=\GL_n(k)$ then this splitting does not occur for any Frobenius endomorphism. Hence we may compute a full set of representatives for the unipotent classes of $G^F$ in this case. In the case of finite general linear groups then we simply take the elements $x+1 \in G^F=\GL_n(\F_q)$ where $x$ varies over the symmetric canonical forms from Theorem \ref{x_J}. Alternatively, the Jordan canonical form also affords a perfectly good set of representatives in this case. The author believes, though, that in the case of the finite unitary groups $G^F=\GU_n(\F_q)$ (i.e. those afforded by a twisted Frobenius endomorphism) no canonical form for unipotent elements was known until now.

We will use the following twisted Frobenius endomorphism on $G=\GL_n(k)$. For $(g_{i,j}) \in \nolinebreak G$, let \begin{equation}   \label{unitaryF}   F((g_{i,j})) = (g_{n+1-j,n+1-i}^q)^{-1}.\end{equation} 

\noindent We will use the compatible Frobenius endomorphism on $\gl= \gl\l_n(k)$ given by \begin{equation}  \label{unitaryFLie}  F((g_{i,j})) = (g_{n+1-j,n+1-i}^q),\end{equation}

\noindent for $(g_{i,j}) \in \gl$. We also note that the map given by \begin{equation}  \label{unitaryFLie2}  F^-((g_{i,j})) = -(g_{n+1-j,n+1-i}^q),\end{equation}

\noindent for $(g_{i,j}) \in \gl$ is also commonly used, and may be more convenient in certain situations. Naturally, we have chosen to use (\ref{unitaryFLie}) as it fixes the representatives obtained in Theorem \ref{x_J}. 

\begin{proposition}  Let $\al \in \F_{q^2} \setminus \F_q$. Then the map \begin{equation}  \label{unitarySpringer}  s: g \mapsto (g -1) (\al - \al^q g)^{-1},\end{equation} with inverse \[s^{-1} : x \mapsto (1 + \al^q x)^{-1} (\al x + 1),\]

\noindent  is a $G$-equivariant bijective morphism $G_{\uni} \rightarrow \gl_{\nil}$ which commutes with {\em(\ref{unitaryF})} and {\em (\ref{unitaryFLie})}.

\end{proposition}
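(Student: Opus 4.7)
The plan is to verify five things in turn: (i) the formula $s(g) = (g-1)(\alpha - \alpha^q g)^{-1}$ is well-defined for every unipotent $g$, (ii) its image lies in $\gl_{\nil}$, (iii) the proposed formula for $s^{-1}$ is a two-sided inverse landing in $G_{\uni}$, (iv) $G$-equivariance, and (v) compatibility with the Frobenius endomorphisms \eqref{unitaryF} and \eqref{unitaryFLie}. Since $g$ unipotent means $g = 1 + \nu$ with $\nu$ nilpotent, we have $\alpha - \alpha^q g = (\alpha - \alpha^q) - \alpha^q \nu$; the scalar part $\alpha - \alpha^q$ is nonzero because $\alpha \notin \F_q$, so this matrix is invertible. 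Moreover $g-1 = \nu$ commutes with $\alpha - \alpha^q g$ (both are polynomials in $\nu$), so $s(g)$ equals $\nu \cdot [(\alpha - \alpha^q) - \alpha^q \nu]^{-1}$, which is a polynomial in $\nu$ with no constant term and is therefore nilpotent. A direct algebraic manipulation (multiply $x(\alpha - \alpha^q g) = g - 1$ out and solve for $g$) yields $g = (1 + \alpha^q x)^{-1}(\alpha x + 1)$, and for $x$ nilpotent both factors are unipotent and commute, so the image is unipotent; this gives simultaneously (iii) and bijectivity.

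Equivariance is immediate because conjugation by $h \in G$ distributes through subtraction, scalar shifts, and inversion:
\[
s(hgh^{-1}) = h(g-1)h^{-1} \cdot \bigl(h(\alpha - \alpha^q g)h^{-1}\bigr)^{-1} = h \, s(g) \, h^{-1}.
\]
That $s$ is a morphism of varieties is clear since inversion is regular on the (open) locus where $\alpha - \alpha^q g$ is invertible, which contains $G_{\uni}$ by (i).

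The main computational step is (v). Write $\sigma(A) = J A^T J$ for the anti-diagonal transpose (an anti-automorphism satisfying $\sigma(AB) = \sigma(B)\sigma(A)$ and $\sigma(\lambda I) = \lambda I$), and let $\phi$ denote the entry-wise $q$-power map, which is a ring homomorphism on matrices in characteristic $p$. Then $F = \sigma \circ \phi$ on $\gl$ and $F(g) = \sigma(\phi(g))^{-1}$ on $G$. Set $h := \sigma(\phi(g))$, so $F(g) = h^{-1}$. On the one hand, since $\phi(\alpha) = \alpha^q$ and $\phi(\alpha^q) = \alpha^{q^2} = \alpha$,
\[
F(s(g)) \;=\; \sigma\bigl((g^{(q)}-1)(\alpha^q - \alpha g^{(q)})^{-1}\bigr) \;=\; (\alpha^q - \alpha h)^{-1}(h - 1).
\]
On the other hand, factoring $\alpha - \alpha^q h^{-1} = (\alpha h - \alpha^q)h^{-1}$ and simplifying,
\[
s(F(g)) \;=\; (h^{-1}-1)\bigl(\alpha - \alpha^q h^{-1}\bigr)^{-1} \;=\; (h-1)(\alpha^q - \alpha h)^{-1}.
\]
Since $h-1$ and $\alpha^q - \alpha h$ are both polynomials in $h$, they commute, and the two expressions agree. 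The subtle point, and the one that needs to be tracked carefully, is the interaction of $\sigma$ (which reverses multiplicative order) with the scalars $\alpha, \alpha^q$ swapped by $\phi$: this is exactly why the choice $\alpha \in \F_{q^2}\setminus \F_q$ is needed for a Frobenius-equivariant Springer morphism, and it is the only nontrivial step.
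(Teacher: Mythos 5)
Your proof is correct and takes essentially the same approach as the paper, which also reduces the check to the identity $s(F(g))=F(s(g))$ and rewrites $F$ on $G$ and on $\gl$ as $J_nF_q(g^{-T})J_n$ and $J_nF_q(x^T)J_n$ respectively (your $\sigma\circ\phi$). The paper simply leaves the resulting computation — the one you carry out with the anti-automorphism $\sigma$ and the swap $\phi(\alpha)=\alpha^q$, $\phi(\alpha^q)=\alpha$ — to the reader, and declares parts (i)--(iv) non-trivial only implicitly; you have filled these in correctly.
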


\begin{proof} The only non-trivial thing to check is that for all $g \in G_{\uni}$, $s(F(g)) = F(s(g))$. This can be checked explicitly be writing these Frobenius endomorphisms in terms of familiar matrix operations as follows. We have \[F(g) = J_nF_q(g^{-T})J_n,\] 

\noindent for $g \in G_{\uni}$, where $F_q$ denotes the standard $q^{th}$-power Frobenius endomorphism, and \[F(x) = J_nF_q(x^{T})J_n.\] 

\noindent for $x \in \gl_{\nil}$.\end{proof}

\begin{corollary} If $x \in \gl_{\nil}$ is $F$-stable, then $s^{-1}(x) \in G^F$. \end{corollary}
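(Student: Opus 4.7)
The plan is to extract this as a one-line consequence of the commutation property established in the preceding Proposition, so there is essentially no obstacle beyond unpacking definitions.

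First I would set $g := s^{-1}(x)$. Since $s^{-1}$ is a map $\gl_{\nil} \to G_{\uni}$, we immediately have $g \in G_{\uni} \subseteq G$, so the only thing left to verify is that $g$ is fixed by $F$, i.e.\ $F(g) = g$.

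Next I would apply the key property of $s$ proved in the Proposition, namely that $s \circ F = F \circ s$ on $G_{\uni}$. Using this together with the hypothesis $F(x) = x$, I compute
\[
s(F(g)) \;=\; F(s(g)) \;=\; F(x) \;=\; x \;=\; s(g).
\]
Since $s : G_{\uni} \to \gl_{\nil}$ is bijective, cancelling $s$ gives $F(g) = g$, which combined with $g \in G_{\uni}$ shows $g = s^{-1}(x) \in G_{\uni}^F \subseteq G^F$.

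The only step that requires any care is confirming that both sides of the commutation identity $s \circ F = F \circ s$ are interpreted consistently: the $F$ on the left refers to the Frobenius on $G$ defined by (\ref{unitaryF}), while the $F$ on the right refers to the compatible Frobenius on $\gl$ defined by (\ref{unitaryFLie}). This is exactly the content of the preceding Proposition, so no further work is needed. There is no genuine difficulty here; the Corollary is a formal bijective-equivariance statement, and the substantive content has already been absorbed into the verification that $s$ intertwines the two Frobenius maps.
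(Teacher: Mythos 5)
Your argument is correct and is exactly the immediate consequence the paper has in mind: the corollary follows formally from the Proposition's commutation $s \circ F = F \circ s$ and the bijectivity of $s$, which is why the paper states it without a separate proof.
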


\begin{rmk} If one uses {\em (\ref{unitaryFLie2}) }, the Cayley map is suitable if the characteristic is not $2$; Kawanaka presents a map in \cite{Kawa} suitable for any characteristic. \end{rmk}

We may now compute representatives for unipotent classes in the finite general unitary groups, by applying the map $s^{-1}$ from Proposition \ref{unitarysection} to the canonical forms described by Theorem \ref{x_J}, provided $\chara(k) \not= 2$. We may adapt these representatives slightly to obtain a canonical set of representatives valid for arbitrary characteristic. Recall that the algorithm used in the proof Theorem \ref{x_J} fails in characteristic $2$ when trying to pass from a situation described by Figure \ref{centralpair4} to one described by Figure \ref{centralpair5}. In fact symmetry, whilst remaining in $\gl_2$, is impossible in some situations, as we saw in Remark \ref{x_J}.

However, instability under $f$ need not obstruct stability under $F = F_q \circ f$. We may obtain mere $F$-stability in all cases as follows, transforming Figure \ref{centralpair4} to Figure \ref{centralpair6}.

\begin{enumerate}

	\item Choose $\al \in \F_{q^2}\setminus \F_q$.
	
		\item Multiply row $(l_2+1)/2$ of $B_{i_1}$ by $1 + \al^{q-1}$.
		
			\item Multiply row $(l_2+1)/2$ of $A_{i_1}$ by $\al$.
	
	\item Add $\al^q(1 + \al^{q-1})^{-1}$ times column $l_1 + 1 - (l_2+1)/2$ to column $(l_2 + 1)/2$ in $B_{i_1}$.
	
    \item Add $\al^{-1}$ times column $l_1 + 1 - (l_2+1)/2$ to column $(l_2 + 1)/2$ in $A_{i_1}$.
	
	    \item One may also need to rescale the central rows of $A_{i_2}, A_{i_3}, A_{i_4},\dots$ depending on the parity of the $l_1, l_2, l_3, \dots$.
\end{enumerate}

\begin{figure}[ht]		\centering

\includegraphics{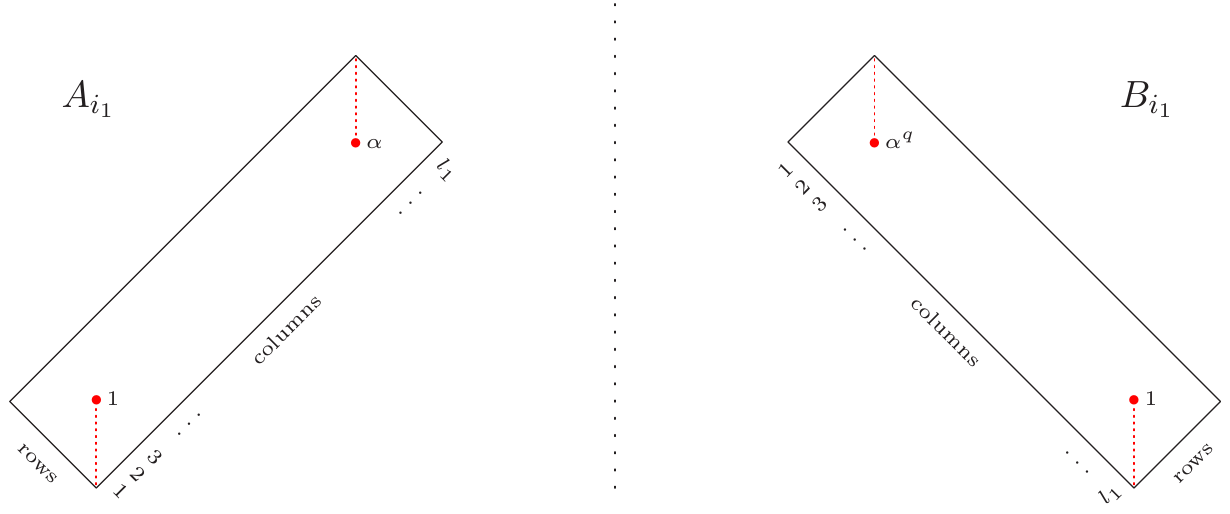}

\caption{$F$-stable central pair of blocks in $x_I$ ($l_2$ odd)} 

\label{centralpair6}

\end{figure}

In the interest of a generic approach we will use the $F$-stable form of Figure \ref{centralpair6} even when $\chara(k) \not= 2$ in the tables that follow. Tables of the symmetric canonical form for $n=2, 3, 4$ and $5$ are presented, together with the corresponding forms for the other classical Lie algebras and finite unitary groups. (Note that for an orbit corresponding to a very even partition in Type $\Dtype$ a representative of only one of the two orbits is given.)

\begin{table}
		\centering

\caption{Canonical forms for $n=2$}

\includegraphics{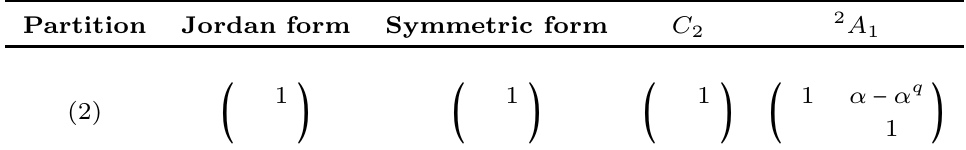} 

\end{table}

\begin{table}
		\centering

\caption{Canonical forms for $n=3$}

\includegraphics{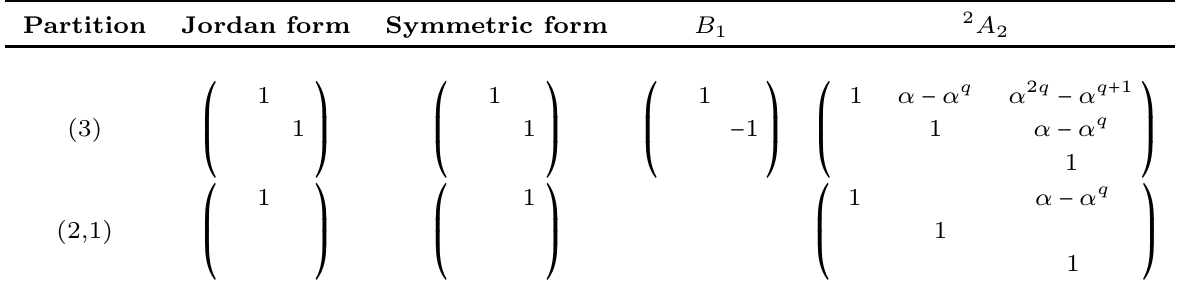} 

\end{table}

\begin{figure}
		\centering

\begin{sideways}

\includegraphics{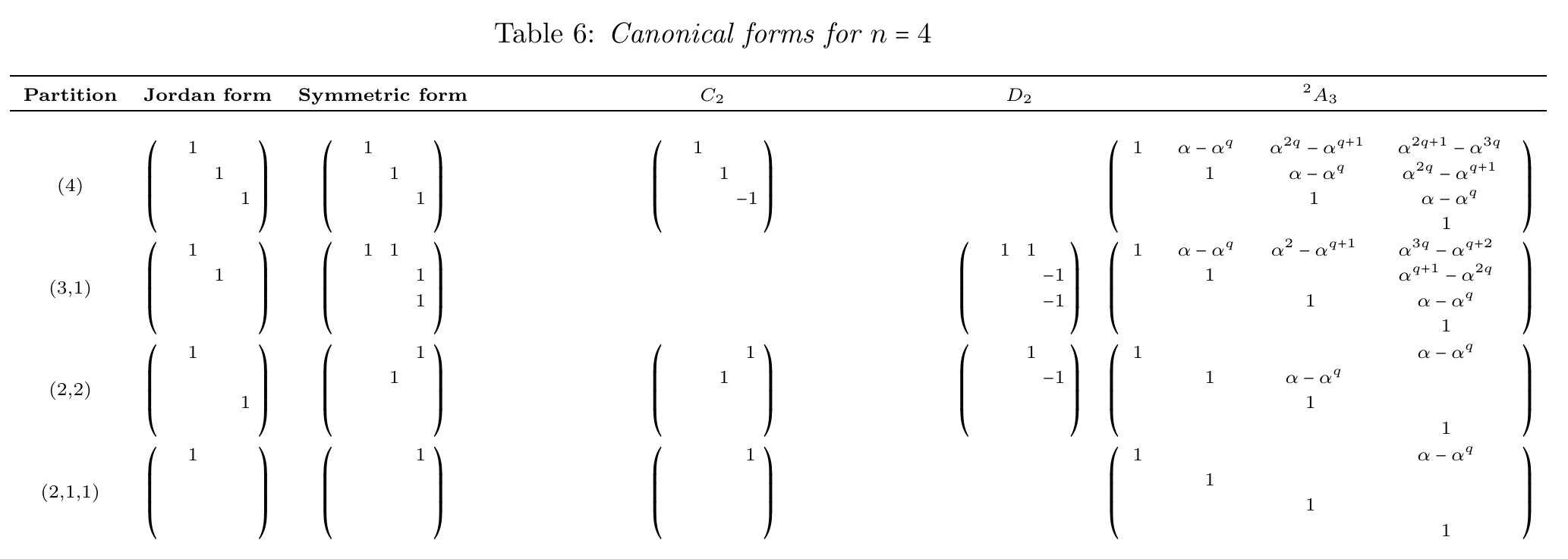} 
\end{sideways}

\end{figure}

\begin{figure}
		\centering

\begin{sideways}

\includegraphics{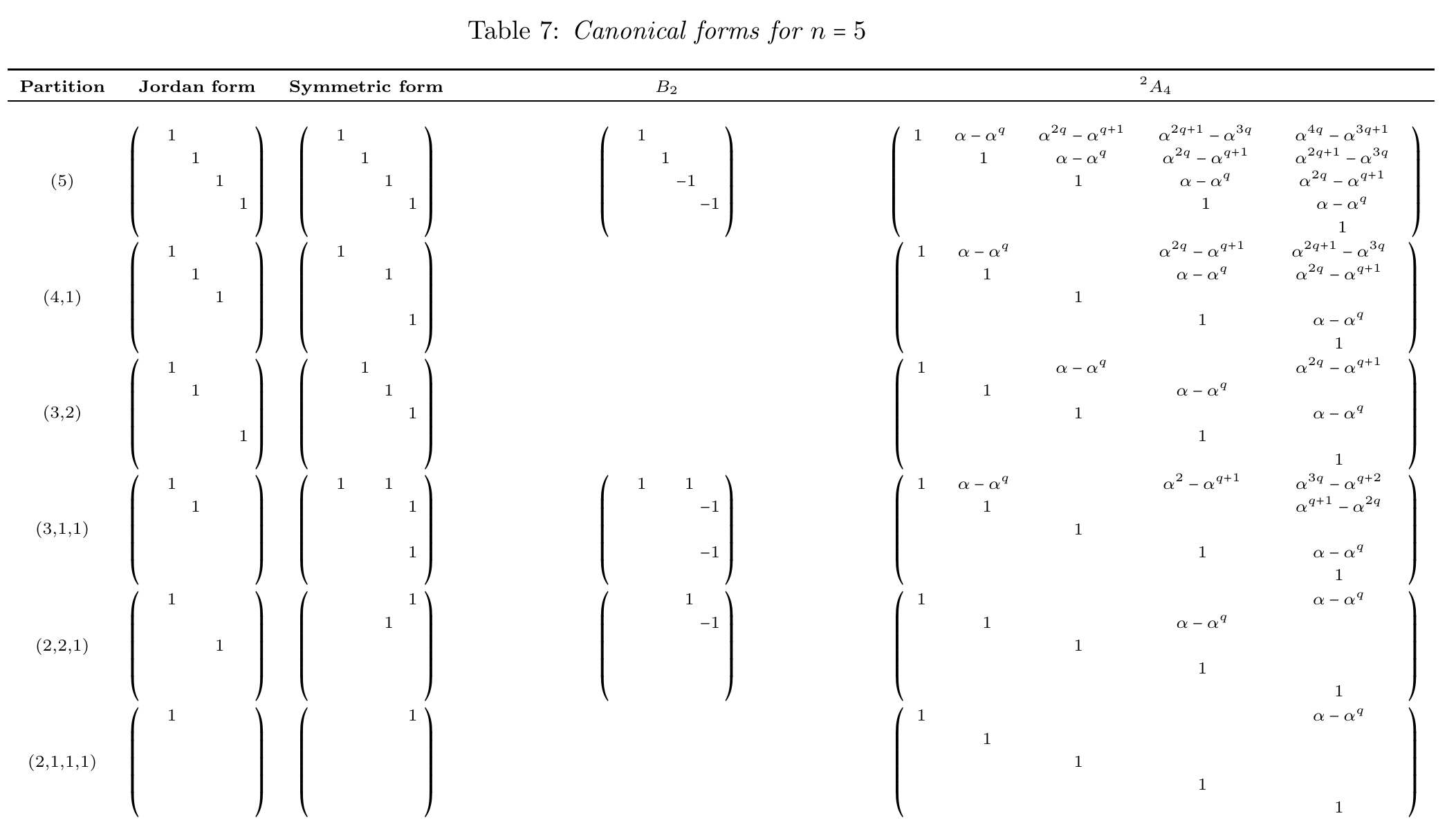} 

\end{sideways}

\end{figure}

\begin{acknowledgements} The author would like to thank the anonymous referee for many useful suggestions for improving the original manuscript. He also wishes to express his gratitude for the hospitality and guidance of Professor T. Shoji and to Nagoya University in the summer of 2009, during which some of this work was completed as part of a visit to study generalised Gelfand-Graev representations of finite groups of Lie type. In addition, he thanks the Japan Society for the Promotion of Science (JSPS) for funding his trip. \end{acknowledgements}

\end{document}